\documentclass[11pt,reqno]{amsart}
\usepackage{color}

\usepackage{amsmath}
\usepackage{amsfonts,amscd}
\usepackage{amssymb}
\usepackage{url}
\usepackage{hyperref}

\usepackage[english]{babel}
\usepackage{booktabs}
\usepackage{tikz}

\theoremstyle{plain}
\newtheorem{theorem}                 {Theorem}[section]

\newtheorem{lemma}        [theorem]  {Lemma}
\newtheorem{proposition}  [theorem]  {Proposition}

\theoremstyle{definition}

\newtheorem{example}      [theorem]  {Example}

\newtheorem{definition}   [theorem]  {Definition}

\newtheorem{remark}       [theorem]  {Remark}

\numberwithin{equation}{section}

\def \rn{{\mathbb R}}
\def \cn{{\mathbb C}}

\def \B{\mathcal B}

\def \E{\mathcal E}
\def \F{\mathcal F}

\def \nab#1#2{\hbox{$\nabla$\kern -.3em\lower 1.0 ex
		\hbox{$#1$}\kern -.1 em {$#2$}}}

\def \nab#1#2{\nabla_{#1}{#2}}

\def\Re{\mathfrak R\mathfrak e}
\def\Im{\mathfrak I\mathfrak m}

\def \g{\mathfrak{g}}
\def \h{\mathfrak{h}}
\def \k{\mathfrak{k}}

\def \m{\mathfrak{m}}

\DeclareMathOperator{\End}{End}

\def \glr#1{\mathfrak{gl}_{#1}(\rn)}
\def \GLC#1{\text{\bf GL}_{#1}(\cn)}
\def \glc#1{\mathfrak{gl}_{#1}(\cn)}

\def \SLR#1{\text{\bf SL}_{#1}(\rn)}
\def \SL2{\widetilde{\text{\bf SL}}_{2}(\rn)}

\def \SLC#1{\text{\bf SL}_{#1}(\cn)}
\def \slc#1{\mathfrak{sl}_{#1}(\cn)}

\def \SO#1{\text{\bf SO}(#1)}
\def \so#1{\mathfrak{so}(#1)}
\def \SOs#1{\text{\bf SO}^*(#1)}

\def \SOC#1{\text{\bf SO}(#1,\cn)}
\def \OC#1{\text{\bf O}(#1,\cn)}

\def \soc#1{\mathfrak{so}(#1,\cn)}

\def \SUs#1{\text{\bf SU}^*(#1)}

\def \U#1{\text{\bf U}(#1)}

\def \SU#1{\text{\bf SU}(#1)}
\def \su#1{\mathfrak{su}(#1)}

\def \Sp#1{\text{\bf Sp}(#1)}
\def \sp#1{\mathfrak{sp}(#1)}

\def \SpR#1{\text{\bf Sp}(#1,\rn)}

\def \SpC#1{\text{\bf Sp}(#1,\cn)}

\DeclareMathOperator{\Div}{div}

\DeclareMathOperator{\trace}{trace}

\numberwithin{equation}{section}

\allowdisplaybreaks

\begin{document}

\subjclass[2020]{53C35, 53C43, 58E20}
	
\keywords{Minimal submanifolds, symmetric spaces, eigenfunctions}

\author{Thomas Jack Munn}
\address{Mathematics, Faculty of Science\\
University of Lund\\
Box 118, Lund 221 00\\
Sweden}
\email{Thomas.Munn@math.lu.se}

\title
[Minimal Submanifolds]
{Complete minimal submanifolds of the non-compact duals to the classical compact Lie groups}

\begin{abstract}
We construct new families of complete minimal submanifolds of codimension two in the non-compact duals to the classical compact Lie groups $\SOC n/\SO n$, $\SLC n/\SU n$ and $\SpC n/\Sp n$.
\end{abstract}

\maketitle

\section{Introduction and Main Results}
\label{section-introduction}
In differential geometry, the study of minimal submanifolds of a given Riemannian ambient space $(M,g)$ plays a central role. The link between minimal submanifolds and complex analysis is classical, known since the Weierstrass-Enneper  parameterisation of minimal surfaces in $\mathbb{R}^3$.
The theory of minimal submanifolds of general Riemannian manifolds has developed into a large field. Of particular relevance here is the following result by Eells and Sampson in 1964:
\begin{proposition}[\cite{Eel-Sam}]
	Let $f$ be a Riemannian immersion, then $f$ is harmonic if and only if it is minimal.
\end{proposition}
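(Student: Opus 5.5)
The plan is to compute the tension field of $f$ directly and show that, for an isometric immersion, it coincides (up to a constant factor) with the mean curvature vector. Let $f:(N,h)\to(M,g)$ be an immersion with $h=f^*g$, and let $\nabla^N$, $\nabla^M$ be the Levi-Civita connections. I would work with the second fundamental form of the map, $\nabla df(X,Y)=\nabla^{f}_X df(Y)-df(\nabla^N_XY)$, where $\nabla^f$ is the pull-back connection on $f^{-1}TM$. The tension field is $\tau(f)=\trace_h\nabla df$, and $f$ is harmonic exactly when $\tau(f)=0$.

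The first step is to check that $\nabla df$ is symmetric and takes values in the normal bundle. Symmetry follows because both connections are torsion-free. For normality, I would identify $TN$ with $df(TN)\subset f^{-1}TM$ and show that the tangential component of $\nabla^f_X df(Y)$ equals $df(\nabla^N_XY)$. This is the standard fact that the induced connection on a Riemannian submanifold is the tangential projection of the ambient one. It can be obtained from the Koszul formula, since $h=f^*g$ and $\nabla^f$ is metric and torsion-free. Equivalently, I would show that $g(\nabla df(X,Y),df(Z))=0$ by cyclically permuting $X,Y,Z$ and using compatibility with the metric. This identification is the only step that needs care, and it is where the hypothesis that $f$ is a Riemannian immersion is used: it is what makes $\nabla^N$ the Levi-Civita connection of $f^*g$.

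Once this is established, $\nabla df(X,Y)=B(X,Y)$, the second fundamental form of the submanifold $f(N)$. Taking the trace over a local $h$-orthonormal frame $e_1,\dots,e_m$, which $df$ maps to a $g$-orthonormal frame of $df(TN)$, gives
\[
\tau(f)=\sum_{i=1}^m B(e_i,e_i)=m\,H,
\]
where $H$ is the mean curvature vector. Hence $\tau(f)=0$ if and only if $H=0$, so $f$ is harmonic if and only if it is minimal.
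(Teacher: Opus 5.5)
Your proof is correct and is the standard argument. Metric compatibility together with $h=f^*g$ makes $g(\nabla df(X,Y),df(Z))$ skew in $Y,Z$, and it is also symmetric in $X,Y$, so it vanishes; hence $\nabla df$ is the second fundamental form and $\tau(f)=m\,H$. The paper gives no proof of this proposition and simply cites Eells and Sampson, whose observation is exactly this identity, so there is nothing further to compare.
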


Harmonic morphisms $\phi:(M,g)\to (N,h)$ are maps between Riemannian manifolds which (locally) pull back harmonic maps to harmonic maps. They are also useful tools for the construction of minimal submanifolds of a given ambient space $M$ as shown by Baird and Eells in 1981.

\begin{theorem}[\cite{Bai-Eel}]
	Let $\phi: M \to N^n$ be a horizontally conformal map. 
	If $n=2$ then $\phi$ is a harmonic morphism if and only if the fibres of $\phi$ are minimal at regular points. If $n > 2$ then any two of the following imply the third:
	\begin{enumerate}
		\item $\phi$ is a harmonic morphism,
		\item $\phi$ is horizontally homothetic (i.e. the gradient of the conformal factor is vertical),
		\item the fibres of $\phi$ are minimal at regular points.
	\end{enumerate}
\end{theorem}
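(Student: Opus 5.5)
The plan is to derive everything from the Fuglede--Ishihara characterisation: a map is a harmonic morphism if and only if it is harmonic and horizontally (weakly) conformal. Since $\phi$ is assumed horizontally conformal, the task reduces to deciding when $\phi$ is harmonic. For this I will compute the tension field $\tau(\phi)=\trace\nabla d\phi$ of a horizontally conformal submersion with dilation $\lambda$. The computation takes place at regular points. The critical set has empty interior for a non-constant horizontally conformal map, so every conclusion extends by continuity.

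Near a regular point I split $TM=\mathcal V\oplus\mathcal H$ into the vertical and horizontal distributions. I choose a local orthonormal frame $\{X_i\}$ of $\mathcal H$ such that $\{d\phi(X_i)/\lambda\}$ is orthonormal in $TN$, together with an orthonormal frame $\{U_r\}$ of $\mathcal V$. I then sum $\nabla d\phi(E,E)=\nabla^\phi_E d\phi(E)-d\phi(\nabla_E E)$ separately over the two frames.

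For the vertical part, $d\phi(U_r)=0$ gives a contribution of $-d\phi\bigl(\sum_r\nabla_{U_r}U_r\bigr)$. This equals $-(m-n)\,d\phi(\mu^{\mathcal V})$, where $\mu^{\mathcal V}$ is the mean curvature vector of the fibres and $m=\dim M$.

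For the horizontal part, I extend the $X_i$ as basic fields. I use the Koszul formula on $N$ and the identity $\langle d\phi X,d\phi Y\rangle=\lambda^2\langle X,Y\rangle$ on horizontal vectors. The horizontal trace then comes out as $(2-n)\,d\phi(\operatorname{grad}_{\mathcal H}\ln\lambda)$, the same conformal-change computation that shows the horizontal trace vanishes exactly when $n=2$. Combining the two parts gives the fundamental equation
\[
\tau(\phi)=-(n-2)\,d\phi\bigl(\operatorname{grad}\ln\lambda\bigr)-(m-n)\,d\phi\bigl(\mu^{\mathcal V}\bigr).
\]
Here the gradient may be replaced by its horizontal projection, because $d\phi$ kills vertical vectors.

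The conclusions then follow by reading off this formula, using that $d\phi$ is injective on $\mathcal H$ at regular points.

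\emph{Case $n=2$.} The first term drops out, so $\tau(\phi)=0$ if and only if $\mu^{\mathcal V}=0$. Hence $\phi$ is a harmonic morphism exactly when its fibres are minimal.

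\emph{Case $n>2$.} Condition (2) says $\operatorname{grad}_{\mathcal H}\ln\lambda=0$, and condition (3) says $\mu^{\mathcal V}=0$. If (2) and (3) hold, both terms vanish, giving (1). If (1) and (2) hold, then $(m-n)\,\mu^{\mathcal V}=0$, giving (3); when $m=n$ the fibres are points and (3) is vacuous. If (1) and (3) hold, then $(n-2)\operatorname{grad}_{\mathcal H}\ln\lambda=0$, which gives (2) since $n\neq 2$.

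I expect the main obstacle to be the horizontal part of the tension computation, specifically getting the exact coefficient $(2-n)$. This needs careful bookkeeping with basic vector fields and with the conformal factor $\lambda^2$ in the Koszul formula. In particular, the second fundamental form of $\phi$ restricted to $\mathcal H\times\mathcal H$ must be identified as
\[
\nabla d\phi(X,Y)=X(\ln\lambda)\,d\phi(Y)+Y(\ln\lambda)\,d\phi(X)-\langle X,Y\rangle\,d\phi(\operatorname{grad}_{\mathcal H}\ln\lambda),
\]
and I would verify this identity first, directly, before taking its trace.
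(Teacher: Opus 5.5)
The paper gives no proof of this statement; it quotes the result from Baird--Eells and uses it only as motivation. Your argument is the standard proof and is correct: you reduce to harmonicity via Fuglede--Ishihara, then read off all cases from the fundamental equation $\tau(\phi)=-(n-2)\,d\phi(\operatorname{grad}\ln\lambda)-(m-n)\,d\phi(\mu^{\mathcal V})$. The horizontal identity you plan to verify does hold. The Koszul computation with basic fields gives $h\bigl(\nabla d\phi(X,Y),d\phi Z\bigr)=\lambda^2\bigl(X(\ln\lambda)g(Y,Z)+Y(\ln\lambda)g(X,Z)-Z(\ln\lambda)g(X,Y)\bigr)$, and its trace produces the coefficient $2-n$.

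Two small repairs are needed.

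\emph{Frame choice.} An orthonormal horizontal frame cannot in general be chosen basic, because a basic field $\phi$-related to $\check X$ has length $|\check X|/\lambda$, which varies along the fibres. Instead, extend the vectors at a single point by basic fields and use that $\nabla d\phi$ is tensorial.

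\emph{Critical points.} You assert without justification that the critical set has empty interior, and you do not need this claim. At a critical point of a horizontally weakly conformal map one has $d\phi=0$, so $\phi$ is locally constant on the interior of $C_\phi$ and $\tau(\phi)=0$ there. The union of that interior with the regular set is dense in $M$. Hence $\tau(\phi)=0$ on the regular set already gives $\tau(\phi)=0$ everywhere by continuity. If ``horizontally conformal'' is read as including submersivity, the issue does not arise at all.
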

This result is particularly useful for producing minimal submanifolds of {\it codimension two}. Complex-valued harmonic morphisms were generalised to $(\lambda,\mu)$-eigenfunctions by Gudmundsson and Sakovich in 2008 (\cite{Gud-Sak-1}).

Recently, Gudmundsson and the current author have shown that $(\lambda,\mu)$-eigenfunctions  can be used to produce minimal submanifolds of codimension two.
\begin{theorem}[\cite{Gud-Mun-1}] \label{Gud-Mun-1-Thm}
Let $\phi:(M,g)\to\cn$ be a complex-valued eigenfunction on a Riemannian manifold, such that $0\in\phi(M)$ is a regular value for $\phi$.  Then the fibre $\phi^{-1}(\{0\})$ is a minimal submanifold of $M$ of codimension two.
\end{theorem}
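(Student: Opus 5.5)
The idea is to compute the mean curvature of the fibre $F=\phi^{-1}(\{0\})$ directly from the two defining conditions of a $(\lambda,\mu)$-eigenfunction,
\[
\tau(\phi)=\lambda\,\phi,\qquad \kappa(\phi,\phi)=g(\nabla\phi,\nabla\phi)=\mu\,\phi^2 ,
\]
where everything is extended complex-bilinearly. Evaluated on $F$, where $\phi=0$, these give $\tau(\phi)=0$ and $\kappa(\phi,\phi)=0$.

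Write $\phi=u+iv$. Then $\kappa(\phi,\phi)=0$ on $F$ says that
\[
|\nabla u|^2=|\nabla v|^2 \quad\text{and}\quad g(\nabla u,\nabla v)=0 \quad\text{along } F .
\]
Since $0$ is a regular value, $\nabla u$ and $\nabla v$ are nonzero and linearly independent along $F$. Hence they are orthogonal of equal length, and they span the normal bundle of the codimension-two submanifold $F$. Put $\Lambda=|\nabla u|^2=|\nabla v|^2>0$ on $F$. The orthonormal normal frame is then $\nu_1=\nabla u/\sqrt\Lambda$, $\nu_2=\nabla v/\sqrt\Lambda$.

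To get the mean curvature, take a local orthonormal frame $X_1,\dots,X_{m-2}$ tangent to $F$ and use the Hessian decomposition of the Laplacian:
\[
\tau(u)=\sum_j \nabla^2u(X_j,X_j)+\nabla^2u(\nu_1,\nu_1)+\nabla^2u(\nu_2,\nu_2).
\]
Since $u$ vanishes on $F$, for tangent $X_j$ we have $\nabla^2u(X_j,X_j)=-du(\nabla_{X_j}X_j)=-g(B(X_j,X_j),\nabla u)$, where $B$ is the second fundamental form. Summing gives
\[
\tau(u)=-(m-2)\,g(H,\nabla u)+\frac{1}{\Lambda}\bigl(\nabla^2u(\nabla u,\nabla u)+\nabla^2u(\nabla v,\nabla v)\bigr),
\]
and the analogous formula holds for $v$.

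The main obstacle is the normal Hessian terms. I would eliminate them by differentiating the identity $\kappa(\phi,\phi)=\mu\phi^2$ in normal directions. Its derivative is $2\nabla^2\phi(Z,\nabla\phi)=Z(\mu)\phi^2+2\mu\phi\, d\phi(Z)$, which vanishes on $F$. Taking $Z=\nabla u$ and $Z=\nabla v$, and separating real and imaginary parts of $\nabla^2\phi(Z,\nabla u+i\nabla v)=0$, gives
\[
\nabla^2u(Z,\nabla u)=\nabla^2v(Z,\nabla v),\qquad \nabla^2u(Z,\nabla v)=-\nabla^2v(Z,\nabla u).
\]
Choosing $Z=\nabla u$ and then $Z=\nabla v$, and using symmetry of the Hessian, yields
\[
\nabla^2u(\nabla u,\nabla u)=\nabla^2v(\nabla u,\nabla v)=-\nabla^2u(\nabla v,\nabla v).
\]
So the normal part of $\tau(u)$ cancels, and likewise for $v$. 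This is the conformality and harmonic-morphism-type cancellation familiar from the Baird--Eells theorem.

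Combining this with $\tau(u)=\tau(v)=0$ on $F$ gives $g(H,\nabla u)=g(H,\nabla v)=0$. Since $\nabla u,\nabla v$ span the normal bundle and $H$ is normal, $H=0$, so $F$ is minimal.
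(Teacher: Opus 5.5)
Your argument is correct and complete. The paper does not prove this statement; it is quoted from \cite{Gud-Mun-1} and used as a black box, so there is no in-paper argument to compare yours with.

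What you have written is the natural direct proof. The vanishing of $\kappa(\phi,\phi)$ on $F$ makes $d\phi$ conformal on the normal bundle of $F$. This splits $\tau(\phi)$ along $F$ into the tangential part $-d\phi(\trace B)$ and the normal part $\Lambda^{-1}\bigl(\nabla^2\phi(\nabla u,\nabla u)+\nabla^2\phi(\nabla v,\nabla v)\bigr)$. The first-order vanishing of $\kappa(\phi,\phi)$ along $F$ then kills the normal part. Your four real identities can be packaged into one line, using symmetry of the Hessian:
\[
\nabla^2\phi(\nabla u,\nabla u)+\nabla^2\phi(\nabla v,\nabla v)=\nabla^2\phi(\overline{\nabla\phi},\nabla\phi)=\tfrac12\,\overline{\nabla\phi}\bigl(\kappa(\phi,\phi)\bigr).
\]
The right-hand side vanishes on $F$ because $d(\mu\phi^2)=2\mu\phi\,d\phi$ is zero there.

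Two small points of hygiene:
\begin{enumerate}
\item Write the tangential sum as $-g(\trace B,\nabla u)$ rather than $-(m-2)\,g(H,\nabla u)$. Then the trivial case $m=2$, where $F$ is discrete, needs no comment.
\item Say explicitly that the $X_j$ are extended to vector fields tangent to $F$. That is what justifies $X_j(X_ju)=0$ along $F$.
\end{enumerate}

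Finally, your proof never uses that $\lambda$ and $\mu$ are constants. It only needs $\tau(\phi)=0$ on $F$, and $\kappa(\phi,\phi)$ together with its differential vanishing on $F$. So it yields the same conclusion when $\tau(\phi)=\lambda\phi$ and $\kappa(\phi,\phi)=\mu\phi^2$ with $\lambda,\mu$ arbitrary smooth functions on $M$.
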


\smallskip 

In this paper, we apply the above result in order to construct minimal submanifolds of codimension two of
 $$\SOC n/\SO n, \ \SLC n/\SU n \text{ and } \SpC n/\Sp n.$$

\begin{theorem}\label{thm-so-minimal}
Let $n \geq 3$, $V$ be a maximal isotropic subspace of $\cn^n$, $a \in V$ and $p \in \cn^n$ be non-zero elements. 
The eigenfunction  $\psi_a : \SOC{n}/\SO n \to \cn$ defined by $$ \psi_a(z) = \trace (p a^*{z} z^{*})$$ has $0$ as a regular value whenever $p$ is not isotropic. In particular, the fibre $\psi^{-1}(\{0\})$ is a complete minimal submanifold of codimension two.
\end{theorem}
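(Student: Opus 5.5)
The plan is to verify the hypotheses of Theorem \ref{Gud-Mun-1-Thm} for $\psi_a$. I take as given, from the paper's earlier computation of the tension field $\tau$ and conformality operator $\kappa$ on $\SOC n/\SO n$, that $\psi_a$ is a $(\lambda,\mu)$-eigenfunction. That computation uses only $a^ta=0$, which holds because $a\in V$ and $V$ is isotropic.

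Write points of $\SOC n/\SO n$ as cosets $z\SO n$. Note that $\psi_a(z)=a^*zz^*p$ is well defined, since $zk(zk)^*=zz^*$ for unitary $k$. Put $h=zz^*$; then $h\in\SOC n$ and $h^{-1}=h^t=\bar h$.

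\textbf{Regular value.} Tangent vectors at $z\SO n$ are represented by curves $z\exp(tX)$ with $X\in\m=i\,\so n$, the purely imaginary skew-symmetric matrices, which are Hermitian. Differentiating gives
$$d\psi_a(zX)=a^*z(X+X^*)z^*p=2\,a^*zXz^*p=2\,u^*Xw,\qquad u=z^*a,\ w=z^*p.$$
The map $X\mapsto u^*Xw$ is complex-linear on $\so{n,\cn}=\m\oplus i\m$. Hence it vanishes on $\m$ if and only if it vanishes on all complex skew-symmetric matrices $Y$. Now $\bar u^tYw=\sum_{i<j}Y_{ij}(\bar u_iw_j-\bar u_jw_i)$, so this happens exactly when $\bar u$ and $w$ are linearly dependent. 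Since $u\neq 0$, dependence means $w=c\,\bar u$ for some $c\in\cn$.

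So suppose $d\psi_a=0$ at $z$. Then $z^*p=c\,z^t\bar a=c\,z^{-1}\bar a$, and therefore $p=c\,h^{-1}\bar a$. Because $p\neq0$, we have $c\neq0$. Then
$$p^tp=c^2\,\bar a^t(h^{-1})^th^{-1}\bar a=c^2\,\bar a^t\,h\,h^t\,\bar a=c^2\,\overline{a^ta}=0,$$
using $(h^{-1})^t=h$ and $hh^t=I$. This contradicts the assumption that $p$ is not isotropic. Hence $d\psi_a\neq0$ everywhere, and in particular at every point of $\psi_a^{-1}(\{0\})$.

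\textbf{Completeness and conclusion.} Since $0$ is a regular value, $\psi_a^{-1}(\{0\})$ is a closed embedded submanifold of the complete manifold $\SOC n/\SO n$. A closed embedded submanifold of a complete Riemannian manifold is complete in the induced metric (Hopf--Rinow). Theorem \ref{Gud-Mun-1-Thm} then gives minimality and codimension two.

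\textbf{Main obstacle.} The statement of Theorem \ref{Gud-Mun-1-Thm} requires $0\in\psi_a(M)$, i.e.\ the fibre must be nonempty. As $z$ varies, $h=zz^*$ runs over the positive-definite Hermitian elements of $\SOC n$, which include $\exp(iS)$ for real skew $S$. So I would look for an explicit $h$ of this form with $a^*hp=0$; with $n\ge3$ there should be enough freedom to do so. Alternatively, since $\psi_a$ is a submersion onto an open set, I would try to show its image is closed under scaling and hence all of $\cn$. I expect this existence step to need the most care.
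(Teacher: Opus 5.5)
Your computation of where $d\psi_a$ vanishes is correct, and it is the paper's argument in different form: $d\psi_a=0$ at $z$ exactly when the rank-one matrix $z^*pa^*z=w\bar u^{t}$ is symmetric, i.e.\ $w\parallel\bar u$, and isotropy then passes between $a$ and $p$ via $\SOC n$. Feeding in $a^ta=0$ directly, as you do, even shows that $\nabla\psi_a$ vanishes nowhere; the paper instead uses $\psi_a(z)=0$ at this step. There are, however, two genuine gaps.

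\textbf{A nonvanishing differential does not make $0$ a regular value.} Since $\psi_a$ takes values in $\cn\cong\rn^2$, you need $d\psi_a$ to have real rank two at each point of $\psi_a^{-1}(\{0\})$. The condition $d\psi_a\neq 0$ only gives rank at least one. A nonzero complex-linear functional on $\soc n$ can restrict to the real form $\m$ with image a real line. For example, with $p=a$ the function $a^*zz^*a$ is real-valued, so its differential has rank at most one everywhere, although it is generically nonzero.

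The missing step is the paper's proposition on critical points in the fibre over $0$. This is exactly where the eigenfunction property enters, which you assumed but never used. Suppose the gradients of $\Re\psi_a$ and $\Im\psi_a$ were proportional and not both zero at $z$. Then $\nabla\psi_a=c\,e$ with $e$ a real unit vector and $c\in\cn\setminus\{0\}$, so $\kappa(\psi_a,\psi_a)(z)=c^2\neq 0$. This contradicts $\kappa(\psi_a,\psi_a)=\mu\,\psi_a^2=0$ on the fibre. You can also get this from your own formula: since $u^tu=a^ta=0$, the Lagrange identity gives $\sum_{r<s}d\psi_a(z\,iY_{rs})^2=2\,\psi_a(z)^2$, and this rules out rank one wherever $\psi_a=0$.

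\textbf{Nonemptiness of the fibre is left open.} Theorem \ref{Gud-Mun-1-Thm} requires $0\in\psi_a(M)$, and you do not establish this.

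Your second suggestion does not work as stated. You have not shown that $\psi_a$ is a submersion, only that $d\psi_a\neq0$. Nor is there any reason for the image to be invariant under complex scalings. The paper's remark on $n=2$ shows that, in exactly this setting, the image is an open ray missing $0$. So a zero genuinely depends on $n\ge 3$ and needs an argument. Your first suggestion, finding a real skew $S$ with $a^*e^{iS}p=0$, restates the problem rather than solving it.

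The paper settles this with Lemma \ref{lemma-zeros-sonC}:
\begin{itemize}
\item $a$ and $p$ are linearly independent, since a multiple of an isotropic vector is isotropic.
\item For $n\ge3$ one writes down explicit $x,y\in\cn^n$ with $x^tx=a^ta=0$, $y^ty=p^tp$, $x^ty=a^tp$ and $x^*y=0$. Here $x=(1,i,0,\dots,0)^t$ and $y=(c/2,-ic/2,s,0,\dots,0)^t$, where $c=a^tp$ and $s^2=p^tp$.
\item Witt's theorem gives $g\in\OC n$ with $ga=x$ and $gp=y$. Composing with the unitary reflection $\text{diag}(-1,1,\dots,1)$ if necessary puts $g$ in $\SOC n$.
\item Then $z=g^*\in\SOC n$ satisfies $\psi_a(z)=(ga)^*(gp)=0$.
\end{itemize}
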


\begin{theorem}\label{thm-slc-minimal}
	Let $a$ and $p \in \cn^n$ be non-zero elements. The eigenfunction  $\psi_a  : \SLC{n}/\SU{n} \to \cn $ defined by $$ \psi_a(z)=\trace (p a^* {z} z^{*})$$ has a nowhere-vanishing gradient. The point $0$ is in its image if and only if $p$ and $a$ are linearly independent. In particular, the fibre $\psi^{-1}(\{0\})$ is a complete minimal submanifold of codimension two.
\end{theorem}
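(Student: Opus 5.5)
The plan is to reduce everything to linear algebra on the model of $\SLC n/\SU n$ as positive definite Hermitian matrices of determinant one, via $z\mapsto w=zz^*$. In this model $\psi_a(z)=a^*zz^*p=a^*wp$. We then compute the differential, check that it has full real rank on the zero fibre, and invoke Theorem \ref{Gud-Mun-1-Thm}. That $\psi_a$ is an eigenfunction is taken as given in the statement.

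\emph{Step 1: the differential.} Horizontal tangent vectors at $z$ are represented by $zX$ with $X$ Hermitian and traceless. Along $z e^{tX}$ we have $w(t)=ze^{2tX}z^*$. Setting $u=z^*a$ and $v=z^*p$, this gives
\begin{equation*}
d\psi_a(zX)=2\,a^*zXz^*p=2\,\trace(X\,vu^*).
\end{equation*}
Since $z$ is invertible, $u$ and $v$ are nonzero, so $vu^*$ has rank one. Hermitian traceless matrices span $\slc n$ over $\cn$. Hence the complex-linear extension of $d\psi_a$ vanishes only if $vu^*$ is a scalar multiple of the identity. For $n\ge 2$ this is impossible for a rank one matrix, so the gradient never vanishes.

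\emph{Step 2: full real rank on the zero fibre.} This is the main point: a regular value needs $d\psi_a$ to be surjective onto $\cn\cong\rn^2$ as a real map, not merely nonzero. Suppose the image of $d\psi_a$ is a real line. Then for some $\theta$, $\trace(X e^{i\theta}vu^*)\in\rn$ for every Hermitian traceless $X$. This forces
\begin{equation*}
e^{i\theta}vu^*=H+icI,\qquad H \text{ Hermitian},\ c\in\rn.
\end{equation*}
On $\psi_a^{-1}(\{0\})$ we have $\trace(vu^*)=u^*v=\psi_a(z)=0$. Taking traces gives $\trace H+icn=0$, so $c=0$. Then $e^{i\theta}vu^*$ is Hermitian of rank one, which forces $v\parallel u$. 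But then $u^*v\neq0$, a contradiction. So $0$ is a regular value whenever it is attained.

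\emph{Step 3: when $0$ is attained.} If $p=\lambda a$, then $\psi_a(z)=\bar\lambda\cdots$; more precisely $a^*wp=\lambda\,a^*wa$ with $a^*wa>0$ for positive definite $w$, so $\psi_a\neq0$. If $a$ and $p$ are linearly independent, extend them to a basis and let $E$ be the matrix with columns $e_1=a$, $e_2=p,\dots,e_n$. Put $w=(EE^*)^{-1}$, rescaled by a positive constant so that $\det w=1$. Then $E^*wE$ is a positive multiple of $I$, so $a^*wp=0$. Writing $w=zz^*$ with $z\in\SLC n$, for instance $z=w^{1/2}$, gives $\psi_a(z)=0$.

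\emph{Conclusion.} By Theorem \ref{Gud-Mun-1-Thm} the fibre $\psi_a^{-1}(\{0\})$ is a minimal submanifold of codimension two. It is a closed embedded submanifold of the complete (Hadamard) manifold $\SLC n/\SU n$, so it is complete in the induced metric.
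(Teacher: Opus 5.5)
Your proof is correct, and Steps 1 and 3 are essentially the paper's argument. The rank-one matrix $vu^*=z^*pa^*z$ is exactly the matrix that Lemma \ref{lemma-critical-point} tests against $\slc n$. Your zero is built the same way as the paper's: the paper takes $z=g^*$ with $g\in\SLC n$ sending $a,p$ to the orthogonal vectors $De_1,e_2$, while you normalise the determinant by a positive scalar and take $z=w^{1/2}$.

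The genuine difference is Step 2. The paper never checks real surjectivity for this particular function. Instead it relies on the general Proposition in the subsection on critical points in the fibre over $0$. That Proposition uses the eigenfunction identity $\kappa(\psi,\psi)=\mu\psi^2=0$ on $\psi^{-1}(\{0\})$ to exclude a real-rank-one differential, so nonvanishing of the complexified gradient is enough. You instead rule out $e^{i\theta}vu^*=H+icI$ directly from $u^*v=\psi_a(z)=0$.

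The paper's route is uniform: the same Proposition also finishes the $\SOC n$ and $\SpC n$ cases once the gradient is known to be nonzero. Yours is self-contained and shows the zero fibre is regular without using the eigenfunction property at all. That property is then needed only for minimality, through Theorem \ref{Gud-Mun-1-Thm}.

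The two arguments are closely linked. With $M=vu^*$, one computes $\kappa(\psi_a,\psi_a)$ as a constant multiple of $\trace(M^2)-\tfrac1n(\trace M)^2=\tfrac{n-1}{n}(u^*v)^2$. So both arguments ultimately rest on $u^*v=0$.
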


	\begin{theorem} \label{thm-Sp-minimal}
	Let $a,b,p \in \cn^n$ such that $p \neq 0$ and $(a,b) \neq (0,0)$ and consider the eigenfunction $\psi_{ab}:\SpC{n}/\Sp{n} \to \cn$ defined by $$ \ \psi_{ab} = \trace\left( \begin{pmatrix}
	a \\ b
\end{pmatrix} \begin{pmatrix}
p^* & 0
\end{pmatrix}  q q^*\right).$$
Then $\psi_{ab}$ has a nowhere-vanishing gradient and furthermore, there exists a $q$ such that
$\psi_{ab}(q)=0$
if and only if either
\begin{enumerate}
	\item the vectors $p$ and $a$ are linearly independent, or
	\item $b$ is non-zero.
\end{enumerate}
 Whenever either of these conditions are satisfied, the fibre over $0$ is a complete minimal submanifold of codimension two.
\end{theorem}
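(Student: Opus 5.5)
The plan is to turn both claims into statements about a single Hermitian form and then use transitivity properties of $\SpC n$. I model $\SpC n/\Sp n$ as the orbit $\{x=qq^*: q\in\SpC n\}$. I write $v=\begin{pmatrix} a\\ b\end{pmatrix}$ and $w=\begin{pmatrix} p\\ 0\end{pmatrix}$ in $\cn^{2n}$, so $w\neq 0$ and $v\neq 0$. Then
$$\psi_{ab}(q)=\trace(v w^* qq^*)=w^*qq^*v=(q^*w)^*(q^*v)=\langle q^*v,q^*w\rangle .$$
I take for granted that $\psi_{ab}$ is a $(\lambda,\mu)$-eigenfunction; that computation is done elsewhere in the paper. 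Once the gradient and the zero set are understood, Theorem \ref{Gud-Mun-1-Thm} gives minimality of the fibre over $0$. Completeness follows because $\psi_{ab}^{-1}(\{0\})$ is a closed embedded submanifold of the complete symmetric space.

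\emph{Nowhere-vanishing gradient.} The first step is to differentiate along the curves $q\exp(tX)$, where $X$ ranges over the Hermitian part $\mathfrak p$ of $\sp{n}$-complement, i.e. $X=X^*\in\mathfrak{sp}(n,\cn)$. This gives
$$d\psi_{ab}(X)=w^*q(X+X^*)q^*v=2\,(q^*w)^*X(q^*v).$$
Because $\psi_{ab}$ is complex-valued, its gradient vanishes at $q$ exactly when both real and imaginary parts of this expression vanish for every $X\in\mathfrak p$. The functional $X\mapsto u^*Xv'$ is complex linear, so this is equivalent to its vanishing on the complexification $\mathfrak p\otimes\cn=\mathfrak{sp}(n,\cn)$.

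Set $u=q^*w\ne0$ and $v'=q^*v\ne0$. The standard representation of $\mathfrak{sp}(n,\cn)$ on $\cn^{2n}$ satisfies $\{Xv':X\in\mathfrak{sp}(n,\cn)\}=\cn^{2n}$ for $v'\neq0$; for instance $X=J^{-1}S$ with $S$ symmetric already suffices. Hence $u^*Xv'$ cannot vanish identically, and the gradient is nowhere zero.

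\emph{Zero set.} Here I use that $q\mapsto q^*$ preserves $\SpC n$, so $q^*$ preserves the symplectic form $\omega$. I split into two cases according to whether $v$ and $w$ are linearly dependent.

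Suppose first that $v$ and $w$ are linearly dependent. Since $w=(p,0)$, this happens exactly when $b=0$ and $a=cp$, i.e. when both conditions (1) and (2) fail. Then $v=cw$ with $c\neq0$, and $\psi_{ab}(q)=c\,|q^*w|^2\neq 0$, so $0$ is not attained.

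Suppose instead that $v$ and $w$ are independent, which is precisely condition (1) or (2). Set $\alpha=\omega(v,w)$; up to sign this is $b^tp$. By Witt's theorem, $\SpC n$ acts transitively on ordered pairs of linearly independent vectors with prescribed $\omega$-value. I then pick a Hermitian-orthogonal independent pair with $\omega$-value $\alpha$, for instance $(e_1+e_2,\ \alpha f_1)$ in a standard symplectic basis, modified to $(e_1,e_2)$ when $\alpha=0$. Choosing $g\in\SpC n$ carrying $(v,w)$ to this pair and setting $q=g^*$ gives $\psi_{ab}(q)=0$.

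The main obstacle I expect is making the transitivity step airtight. This means checking the Witt-type statement for pairs with a given value of $\omega$, including the degenerate case $\alpha=0$, and confirming that $q^*$ ranges over all of $\SpC n$. As a fallback, when $b=0$ and $a,p$ are independent, I would take the explicit $q=\mathrm{diag}(D^{1/2},D^{-1/2})$ with $D$ positive definite Hermitian satisfying $p^*Da=0$; then $\psi_{ab}=p^*Da$.
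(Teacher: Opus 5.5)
Your proof is correct and takes a different route from the paper in both halves.

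\textbf{Gradient.} The paper computes $\mathfrak{sp}_n(\cn)^\perp=\{Y : Y^tJ=JY\}$ via the isometric involution $X\mapsto JX^tJ$. It notes that every such $Y$ has even rank, since $JY$ is skew-symmetric, so the rank-one matrix $q^*Aq$ can never lie there. You instead use that $\mathfrak{sp}(n,\cn)$ moves any nonzero vector onto all of $\cn^{2n}$, and pick $X$ with $X(q^*v)=q^*w$. This is the same condition: $\mathfrak{sp}(n,\cn)$ is closed under $X\mapsto X^*$, so orthogonality of $q^*vw^*q$ to $\mathfrak{sp}(n,\cn)$ is exactly vanishing of $X\mapsto (q^*w)^*X(q^*v)$. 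Your version avoids computing the complement.

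\textbf{Zero set.} The paper passes to an adapted unitary basis and writes down explicit points: $\begin{pmatrix} I & S\\ 0 & I\end{pmatrix}$ with a tuned symmetric $S$ when $a,p$ are independent, and $\begin{pmatrix} I & 0\\ y & I\end{pmatrix}$ when $a=\gamma p$, $b\neq 0$. You handle both cases at once by proving the symplectic analogue of Lemma \ref{lemma-zeros-sonC}. By Witt's theorem the only $\SpC n$-invariant of an independent pair is $\omega(v,w)$, and a Hermitian-orthogonal pair with any prescribed value exists in a unitary symplectic basis. This is shorter and shows why the criterion is exactly independence of $v=(a,b)$ and $w=(p,0)$. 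It also needs no determinant correction, unlike the orthogonal case. The paper's version has the advantage of exhibiting concrete points of the fibre.

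\textbf{Three small points.}
\begin{enumerate}
\item A nowhere-vanishing gradient only gives that $d\psi_{ab}$ has rank at least one. Your formula shows $d\psi_{ab}$ on $\mathfrak p$ is the restriction of a nonzero complex-linear functional, which a priori could have real rank one. Theorem \ref{Gud-Mun-1-Thm} needs $0$ to be a regular value. For that you should invoke the paper's Proposition in the subsection on critical points: on the zero fibre $\kappa(\psi_{ab},\psi_{ab})=\mu\psi_{ab}^2=0$, which excludes rank one.
\item The pair $(e_1+e_2,\alpha f_1)$ requires $n\ge 2$, whereas $(e_1,\alpha f_1)$ works for every $n$. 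When $n=1$, $\alpha\neq 0$ is forced, so $(e_1,e_2)$ is never needed there.
\item Your fallback matrix $\text{diag}(D^{1/2},D^{-1/2})$ lies in $\SpC n$ only if $D$ is real. A real positive definite $D$ with $p^*Da=0$ need not exist; take $p=e_1$, $a=e_1+ie_2$. The block-diagonal elements of $\SpC n$ are $\text{diag}(x,x^{-t})$, so use $\text{diag}(D^{1/2},\overline{D}^{\,-1/2})$ instead. This gives $qq^*=\text{diag}(D,\overline{D}^{\,-1})$ and still $\psi_{ab}=p^*Da$.
\end{enumerate}
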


\section{Eigenfunctions and Eigenfamilies}
Let $(M,g)$ be an $m$-dimensional Riemannian manifold and $T^{\cn}M$ be the complexification of the tangent bundle $TM$ of $M$. We extend the metric $g$ to a complex bilinear form on $T^{\cn}M$.  Then the gradient $\nabla\phi$ of a complex-valued function $\phi:(M,g)\to\cn$ is a section of $T^{\cn}M$.  In this situation, the well-known complex-linear {\it Laplace-Beltrami operator} (alt. {\it tension field}) $\tau$ on $(M,g)$ acts locally on $\phi$ as follows:
$$
\tau(\phi)=\Div (\nabla \phi)=\sum_{i,j=1}^m\frac{1}{\sqrt{|g|}} \frac{\partial}{\partial x_j}
\left(g^{ij}\, \sqrt{|g|}\, \frac{\partial \phi}{\partial x_i}\right).
$$
For two complex-valued functions $\phi,\psi:(M,g)\to\cn$ we have the following well-known fundamental relation
\begin{equation*}\label{equation-basic}
\tau(\phi\, \psi)=\tau(\phi)\,\psi +2\,\kappa(\phi,\psi)+\phi\,\tau(\psi),
\end{equation*}
where the complex bilinear {\it conformality operator} $\kappa$ is given by $$\kappa(\phi,\psi)=g(\nabla \phi,\nabla \psi).$$  Locally this satisfies 
$$\kappa(\phi,\psi)=\sum_{i,j=1}^mg^{ij}\cdot\frac{\partial\phi}{\partial x_i}\frac{\partial \psi}{\partial x_j}.$$

\begin{definition}[\cite{Gud-Sak-1}]\label{definition-eigenfamily}
Let $(M,g)$ be a Riemannian manifold. Then a complex-valued function $\phi:M\to\cn$ is said to be an {\it eigenfunction} if it is eigen both with respect to the Laplace-Beltrami operator $\tau$ and the conformality operator $\kappa$; that is, there exist complex numbers $\lambda,\mu\in\cn$ such that $$\tau(\phi)=\lambda\cdot\phi\ \ \text{and}\ \ \kappa(\phi,\phi)=\mu\cdot \phi^2.$$	
A set $\E =\{\phi_i:M\to\cn\ |\ i\in I\}$ of complex-valued functions is said to be an {\it eigenfamily} on $M$ if there exist complex numbers $\lambda,\mu\in\cn$ such that for all $\phi,\psi\in\E$ we have 
$$\tau(\phi)=\lambda\cdot\phi\ \ \text{and}\ \ \kappa(\phi,\psi)=\mu\cdot \phi\,\psi.$$ 
\end{definition}

Readers interested in learning more about the theory of harmonic morphisms can consult the book \cite{Bai-Woo-book} or the online bibliography \cite{Gud-bib}.

Eigenfamilies were originally used in \cite{Gud-Sak-1} to construct harmonic morphisms. More recently, they have been studied as objects of independent interest, see, for example, \cite{Gud-Sif-Sob-2},\cite{Gud-Sob-1},\cite{Rie-1},\cite{Rie-Sif-1},\cite{Mun-Rie-1}, \cite{Gha-Gud-4}, \cite{Gha-Gud-5}.
Of particular relevance here is the recent paper \cite{Gud-Mun-1}, which introduces a method of producing minimal submanifolds of codimension two using eigenfunctions.
\begin{theorem}[\cite{Gud-Mun-1}] \label{Gud-Mun-1-Thm}
Let $\phi:(M,g)\to\cn$ be a complex-valued eigenfunction on a Riemannian manifold, such that $0\in\phi(M)$ is a regular value for $\phi$.  Then the fibre $\phi^{-1}(\{0\})$ is a minimal submanifold of $M$ of codimension two.
\end{theorem}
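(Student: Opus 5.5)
Write $\phi=u+iv$ with $u,v$ real, and let $F=\phi^{-1}(\{0\})=\{u=0\}\cap\{v=0\}$. Since $0$ is a regular value, $\nabla u$ and $\nabla v$ are linearly independent along $F$. Hence $F$ is a smooth submanifold of codimension two, and its normal bundle is spanned by $\nabla u,\nabla v$. The plan is to compute the mean curvature vector $H$ of $F$ directly and to show that its components along $\nabla u$ and $\nabla v$ vanish, using only the two eigen-equations evaluated on $F$.

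\emph{Step 1: the Hessians along $F$.} For a real function $f$ vanishing on $F$, take an orthonormal frame $\{e_k\}$ of $TF$. The standard formula gives
$$\sum_k \mathrm{Hess} f(e_k,e_k)=\sum_k\big(e_k(e_k f)-(\nabla_{e_k}e_k)f\big)=-g(\nabla f, m H),$$
where $m=\dim F$. This holds because $e_k f\equiv 0$ on $F$ and only the normal part of $\nabla_{e_k}e_k$ pairs nontrivially with $\nabla f$. On the other hand, with $N$ the normal space,
$$\tau(f)=\sum_k \mathrm{Hess} f(e_k,e_k)+\mathrm{trace}_N \mathrm{Hess} f.$$
Therefore minimality, i.e.\ $g(\nabla u,H)=g(\nabla v,H)=0$, is equivalent to
$$\tau(u)=\mathrm{trace}_N\mathrm{Hess}\,u \quad\text{and}\quad \tau(v)=\mathrm{trace}_N\mathrm{Hess}\,v \quad\text{on } F.$$
Since $\tau(\phi)=\lambda\phi$ vanishes on $F$, we get $\tau(u)=\tau(v)=0$ there. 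So it remains to show that $\mathrm{trace}_N\mathrm{Hess}\,\phi=0$ on $F$, as a complex quantity.

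\emph{Step 2: use the conformality equation.} Set $\kappa(\phi,\phi)=\mu\phi^2$. The function $G:=g(\nabla\phi,\nabla\phi)=|\nabla u|^2-|\nabla v|^2+2i\,g(\nabla u,\nabla v)$ vanishes to second order on $F$. Hence $dG=0$ on $F$, which gives $\mathrm{Hess}\,\phi(\nabla\phi,X)=0$ for all real $X$ at points of $F$; here $\mathrm{Hess}\,\phi$ is extended complex-bilinearly. Taking $X=\nabla u$ and $X=\nabla v$ yields linear relations between the entries of the $2\times2$ matrices $A=\mathrm{Hess}\,u|_N$ and $B=\mathrm{Hess}\,v|_N$. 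Concretely, in the basis $\nabla u,\nabla v$ we have $(A+iB)(\nabla u+i\nabla v)=0$ as a vector in $N^{\cn}$.

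\emph{Step 3 (the main obstacle): extracting the trace.} The task is to convert this relation into $\mathrm{trace}_N(A+iB)=0$. The plan is to work in an orthonormal basis $n_1,n_2$ of $N$ and write $\nabla u=\alpha n_1+\beta n_2$, $\nabla v=\gamma n_1+\delta n_2$. Set $w=\nabla\phi=\nabla u+i\nabla v$. Consider the complex symmetric $2\times2$ matrix $C=A+iB$. From Step 2 we have $Cw=0$ with $w\neq0$, so $C$ is singular. This alone does not force $\operatorname{tr}C=0$. The additional input I expect to need is the bilinear form $g(w,w)$. If $g(w,w)\neq0$ at a point, then $\mu\phi^2$ vanishing there forces nothing. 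But $G=\mu\phi^2$ vanishes on $F$, so $g(w,w)=0$; that is, $w$ is an isotropic vector in $N^{\cn}\cong\cn^2$. Up to scale, $w=n_1\pm i n_2$. A complex symmetric $2\times2$ matrix annihilating $(1,\pm i)$ must be a multiple of $\begin{pmatrix}1&\pm i\\ \pm i&-1\end{pmatrix}$, which is traceless. Hence $\operatorname{tr}C=0$, and by Step 1 this gives $H=0$.

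The delicate points are verifying the first-order vanishing of $G$ carefully in complex-bilinear notation, and checking that $w$ spans an isotropic line, which uses linear independence of $\nabla u$ and $\nabla v$. I expect the symmetric-matrix argument of Step 3 to be the crux.
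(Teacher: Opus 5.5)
Your proof is correct and complete. The paper gives no proof of this statement; it quotes the result from \cite{Gud-Mun-1} and only applies it, so there is no in-paper argument to compare yours against. Your route is sound and self-contained:
\begin{itemize}
\item $\tau(\phi)=\lambda\phi=0$ on $F$ reduces minimality to $\trace_N\mathrm{Hess}\,\phi=0$.
\item You then use that $\kappa(\phi,\phi)=\mu\phi^2$ vanishes on $F$ together with its differential.
\end{itemize}
You correctly saw that both facts are needed. $G|_F=0$ makes $\nabla\phi$ isotropic, and $dG|_F=0$ gives $\mathrm{Hess}\,\phi(\nabla\phi,\cdot)=0$. Mere harmonicity plus horizontal conformality along $F$ would not suffice.

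Step 3 can be shortened, with no case analysis of isotropic lines. $G|_F=0$ means $|\nabla u|=|\nabla v|=:\ell>0$ and $\nabla u\perp\nabla v$ on $F$. So $\nabla u/\ell,\ \nabla v/\ell$ is an orthonormal basis of $N$, and
\[
\ell^2\,\trace_N\mathrm{Hess}\,\phi=\mathrm{Hess}\,\phi(\nabla u,\nabla u)+\mathrm{Hess}\,\phi(\nabla v,\nabla v)=\mathrm{Hess}\,\phi(\nabla\phi,\overline{\nabla\phi})=0 .
\]
The middle equality uses the symmetry of the Hessian. The last equality is your Step 2, extended complex-linearly in the second slot.

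This also shows why your phrase ``in the basis $\nabla u,\nabla v$'' in Step 2 is harmless. It works only because that basis is orthogonal with equal lengths on $F$. In general the trace must be taken in an orthonormal basis, as you do in Step 3.
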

This theorem has been used to construct minimal submanifolds of some compact
\cite{Geg-Gud-1} and non-compact \cite{Gud-Lar-1} Riemannian symmetric spaces.

\begin{remark}
	Whenever $(M,g)$ is complete, the minimal submanifolds produced in this manner will be complete, since $\phi^{-1}(\{0\})$ will be a closed embedded submanifold. This is the case in this paper since every Riemannian symmetric space is complete.
\end{remark}

\section{Non-Compact Duals to the Classical Compact Lie Groups}
\label{section-symmetric-spaces}

The classical compact simple Lie groups $\SO n$, $\SU n$, and $\Sp n$ are all Riemannian symmetric spaces when equipped with their standard bi-invariant metrics induced by their negative Killing forms. In this section we recall this construction and their non-compact dual symmetric spaces.

 Let $H$ be one of the compact groups $\SO n$, $\SU n$, or $\Sp n$ with Lie algebra $\h$.
 
 Then the group $U=H\times H$ acts transitively by isometries on $H$ by
 $$(h_1,h_2) \cdot x = h_1 x h_2^{-1}.$$

The isotropy subgroup $K =\Delta$ is the diagonal 
$$\Delta =\{(h,h)\in H\times H\, |\, h\in H\}.$$
Thus, we identify $$ U/K = (H \times H)/\Delta \simeq H.$$

The involution is given by
$$ \sigma: (p,q) \mapsto (q,p)$$
with differential
$$ d\sigma (X,Y) \mapsto (Y,X).$$
If $B$ is an orthonormal basis for $\h $ then there are bases for the $\pm 1$-eigenspaces of $d\sigma$ given by
$$B_\k = \{ (X,X) \mid X \in B\}, \ \ \ B_{\m} = \{ (X,-X) \mid X \in B\},\\$$ 
so we have 
$$ \mathfrak u = \k \oplus \m.$$
Then we define
$$ \g = \k \oplus i \m \subset \mathfrak u^\cn $$
which is the Lie algebra of the non-compact dual group. The group $G$ is the connected subgroup of the complexification 
$$ G^\cn = U^\cn = (H \times H)^\cn$$ and $G/K$ is the non-compact dual of $U/K$. Explicitly, there is the embedding
$$ \iota : H^\cn \to G^\cn, \ \ z \mapsto (z,(z^*)^{-1})$$
whose image is $G$. Its restriction to $H$ is 
$$ h \mapsto (h,h)$$
which is identified with $\Delta$ and so the non-compact symmetric space
$$ G/K \simeq H^\cn/H.$$

The non-compact dual Riemannian symmetric spaces of $\SO n$, $\SU n$, and $\Sp n$ are $\SOC n/\SO n$, $\SLC n/\SU n$ and $\SpC n/\Sp n$ respectively.

\section{Dual Eigenfunctions}
\label{section-dual-eigenfunction}
The duality between a compact symmetric space $U/K$ and its non-compact dual $G/K$ induces a duality between $(\lambda,\mu)$-eigenfamilies on the two spaces. This was first considered in the case of harmonic morphisms, which are $(0,0)$-eigenfunctions, in \cite{Gud-Sve-1} and \cite{Gud-Sve-2} by extending the (real-analytic) $(\lambda,\mu)$-eigenfunction on $G/K$ to a holomorphic $K$-invariant function on the complexification $G^{\cn}$ via analytic continuation. This holomorphic function restricts to a real-analytic $(-\lambda,-\mu)$-eigenfunction on $U/K$.

In this work we explicitly construct the dual eigenfunctions using the Peter-Weyl theorem and Weyl's unitary trick.

\begin{proposition}[Weyl's unitary trick \cite{Knapp-Book}]
	Let $G$ be an analytic subgroup of complex matrices whose linear Lie algebra $\g$ is semisimple and stable under the map $\theta$ given by negative conjugate transpose. Let $\g = \k \oplus i\m$ be the Cartan decomposition of $\g$ defined by $\theta$ and suppose that $\k \cap \m = 0$. Let $U$ and $G^\cn$ be the analytic subgroups of matrices with respective Lie algebras $\mathfrak{u} = \k \oplus \m$ and $\g^\cn = (\k \oplus i \m)^\cn$. The group $U$ is compact. Suppose that $U$ is simply connected. If $V$ is any finite-dimensional complex vector space, then a representation of any of the following kinds on $V$ leads, via the formula
	$$ \g^\cn = \g \oplus i \g = \mathfrak{u} \oplus i \mathfrak{u},$$
	to a representation of each of the other kinds. Under this correspondence invariant subspaces and equivalences are preserved:
	\begin{itemize}
		\item a representation of $G$ on $V$,
		\item a representation of $U$ on $V$,
		\item a holomorphic representation of $G^\cn$ on V,
		\item a representation of $\g$ on $V$,
		\item a representation of $\mathfrak{u}$ on $V$,
		\item a complex-linear representation of $\g^\cn$ on $V$.
	\end{itemize}
\end{proposition}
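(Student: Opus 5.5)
This is Weyl's unitary trick, quoted from Knapp's book, so I will follow the standard argument that all six kinds of representation reduce to complex-linear representations of $\g^\cn$. The plan has two steps: first pass between groups and Lie algebras, then pass between the three Lie algebras.

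\textbf{Lie algebra level.} Since $\g\cap i\g=0$ (from $\k\cap\m=0$ and the Cartan decomposition), we have $\g^\cn=\g\oplus i\g$, and likewise $\g^\cn=\mathfrak u\oplus i\mathfrak u$. A real-linear representation $\pi$ of $\g$ on the complex space $V$ extends uniquely to a complex-linear representation of $\g^\cn$ by
$$\pi(X+iY)=\pi(X)+i\pi(Y).$$
The bracket relations follow from bilinearity. Conversely, restricting a complex-linear representation of $\g^\cn$ gives back a representation of $\g$. The same holds for $\mathfrak u$. Both constructions commute with restriction, so they preserve invariant subspaces and intertwiners. A subspace invariant under $\g$ is invariant under $i\g$ because $\pi(iY)=i\pi(Y)$ and $V$ is a complex space. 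This shows that the three Lie algebra kinds are equivalent.

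\textbf{Group to algebra.} Differentiating a representation of $G$, of $U$, or a holomorphic representation of $G^\cn$ gives a representation of $\g$, of $\mathfrak u$, or a complex-linear representation of $\g^\cn$ respectively. Since the groups are connected, this loses no information: a connected group is generated by $\exp$ of its Lie algebra, so invariant subspaces and equivalences are the same at the group and algebra levels.

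\textbf{Algebra to group (main obstacle).} The hard step is integrating a Lie algebra representation back to a group. Because $U$ is simply connected, any representation of $\mathfrak u$ integrates to $U$ by the Lie correspondence. The remaining work is to show that $G^\cn$ and $G$ are reached as well. For this I will use the polar decomposition. The map
$$U\times i\mathfrak u\to G^\cn,\qquad (u,X)\mapsto u\exp X,$$
is a diffeomorphism; this holds because the Lie algebras consist of matrices stable under $\theta$, so the Cartan decomposition for matrix groups applies. Hence $G^\cn$ is homotopy equivalent to $U$ and is therefore simply connected. So a complex-linear representation of $\g^\cn$ integrates to a representation of $G^\cn$, which is holomorphic because its differential is complex-linear. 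Finally, restricting this representation to the analytic subgroup $G\subset G^\cn$ gives the representation of $G$, and its differential is the original representation of $\g$. Note that $G$ itself need not be simply connected; this does not matter, because we integrate on $G^\cn$ and then restrict.

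Combining the three steps closes the chain of correspondences among all six kinds, and each step preserves invariant subspaces and equivalences. Compactness of $U$ is standard: it has a Lie algebra on which the Killing form is negative definite and it is closed in the unitary group.
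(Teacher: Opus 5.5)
The paper gives no proof of this proposition (it is quoted from Knapp), and your argument is essentially the standard proof from that source: pass among $\g$, $\mathfrak u$ and $\g^\cn$ by complexification and restriction, use the polar decomposition $G^\cn\cong U\times\exp(i\mathfrak u)$ to make $G^\cn$ simply connected, then integrate on $G^\cn$ and restrict to $G$ and $U$. It is correct; the one step stated quickly is the polar decomposition, which needs $G^\cn$ to be closed and stable under conjugate transpose (true here since $\g^\cn$ is semisimple and $\theta$-stable), after which $G^\cn$ meets the unitary group in a connected subgroup with Lie algebra $\mathfrak u$, i.e.\ in $U$, and this also yields the compactness of $U$.
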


\begin{theorem}[Peter-Weyl Theorem \cite{Knapp-Book}]
	If $H$ is a compact group, then the linear span of all matrix coefficients for all finite-dimensional irreducible unitary representations is dense in $L^2(H)$.
\end{theorem}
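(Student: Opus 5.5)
Since the statement is the classical Peter--Weyl theorem for a compact group $H$, the plan is the standard functional-analytic argument via compact convolution operators. Let $dh$ be normalized Haar measure on $H$ and let $\mathcal{M}\subset L^2(H)$ be the linear span of matrix coefficients $h\mapsto\langle \pi(h)v,w\rangle$ of finite-dimensional irreducible unitary representations $\pi$. It suffices to show $\mathcal{M}^\perp=0$.

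Two preliminary observations come first. The closure $\overline{\mathcal{M}}$ is invariant under left and right translations, because translating a matrix coefficient of $\pi$ gives another matrix coefficient of $\pi$. Moreover, a matrix coefficient of any finite-dimensional unitary representation lies in $\mathcal{M}$. Indeed, such a representation splits into irreducibles by complete reducibility: average an inner product over $H$ and take orthogonal complements of invariant subspaces.

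For the main step, I would use convolution operators. For a continuous function $\varphi$ on $H$ with $\varphi(h^{-1})=\overline{\varphi(h)}$, set $T_\varphi f=\varphi * f$. Since its kernel $\varphi(xy^{-1})$ is continuous on the compact space $H\times H$, the operator $T_\varphi$ is Hilbert--Schmidt, hence compact. The symmetry condition on $\varphi$ makes it self-adjoint. Because $T_\varphi$ is a left convolution, it commutes with right translations $R_g$. The subspace $\mathcal{M}^\perp$ is closed and right-invariant, and it is also $T_\varphi$-invariant, since $T_\varphi$ is a limit of combinations of left translations. Suppose $0\neq f\in\mathcal{M}^\perp$. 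Continuity of translation in $L^2$ lets me choose $\varphi$ to be an approximate identity supported near $e$ and symmetrized, with $\|\varphi*f-f\|<\|f\|/2$. Then $T_\varphi$ restricted to $\mathcal{M}^\perp$ is a nonzero compact self-adjoint operator. By the spectral theorem it has an eigenvalue $\lambda\neq0$ whose eigenspace $E_\lambda\subset\mathcal{M}^\perp$ is finite-dimensional and nonzero. Since $T_\varphi$ commutes with $R_g$, the space $E_\lambda$ is right-invariant, and $R$ restricted to it is a finite-dimensional unitary representation.

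The step I expect to be the main obstacle is to conclude that every element of $E_\lambda$ is a matrix coefficient, which yields the contradiction $E_\lambda\subset\mathcal{M}\cap\mathcal{M}^\perp=0$. The difficulty is that elements of $L^2$ are only defined almost everywhere. I would handle this by noting that each $F\in E_\lambda$ equals $\lambda^{-1}\varphi*F$, which is continuous, so point evaluation at $e$ is a well-defined linear functional on $E_\lambda$. Choose an orthonormal basis $F_1,\dots,F_k$ of $E_\lambda$ and write $R_g F_j=\sum_i c_{ij}(g)F_i$, where the $c_{ij}$ are matrix coefficients of $R$ on $E_\lambda$. Evaluating at $e$ gives $F_j(g)=\sum_i c_{ij}(g)F_i(e)$. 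Thus $F_j$ is a linear combination of matrix coefficients of a finite-dimensional unitary representation, so $F_j\in\mathcal{M}$ by the preliminary observation. This contradiction shows $\mathcal{M}^\perp=0$, so $\mathcal{M}$ is dense in $L^2(H)$.
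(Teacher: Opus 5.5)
Your proof is correct. The paper gives no proof of this statement and simply quotes it from Knapp. What you give is essentially the standard compact-operator proof of the kind found there, up to inessential choices such as the convolution kernel: a self-adjoint Hilbert--Schmidt convolution operator commuting with translations, a nonzero finite-dimensional translation-invariant eigenspace inside $\mathcal{M}^\perp$, and evaluation at the identity to exhibit its elements as matrix coefficients.

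You use two standard facts tacitly, and neither affects the argument. First, Haar measure charges every nonempty open set, so continuous representatives are unique and $R_gF_j=\sum_i c_{ij}(g)F_i$ holds pointwise. Second, the right regular representation is strongly continuous, so $R|_{E_\lambda}$ is a genuine continuous representation.
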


We now give an overview of this method applied to an arbitrary compact symmetric space $U/K$ before employing it in the cases of $\SO n$, $\SU n$, and $\Sp n$.

Let $\phi$ be a $(\lambda,\mu)$-eigenfunction on $U/K$. 
Since $U/K$ is compact, each Laplace eigenspace is finite-dimensional and so there exists a finite-dimensional unitary representation $\rho:U \to U(V)$ with orthonormal basis $\{e_i\}$ of $V$ such that we may write $\phi$ as some $K$-invariant linear combination of matrix coefficient functions with respect to this basis:
$$ \phi(u) =\sum_{i,j} c_{ij} \langle \rho_U(u) e_i, e_j \rangle .$$ 
Then we can define the function $\psi: G \to \cn$ by
$$ \psi(g) =\sum_{i,j} c_{ij} \langle \rho_G(g) e_i, e_j \rangle .$$
Then $\psi$ is clearly $K$-invariant by construction and a computation (see
\cite{Gud-Sve-1}) shows that it is a $(-\lambda,-\mu)$-eigenfunction.

\section{Dual Eigenfunctions on the Classical Non-Compact Duals}
In this section we explicitly describe the duality principle for eigenfunctions on the classical compact Lie groups. Once this is established we prove some preliminary results on the zeros and critical points of the eigenfunctions on the non-compact duals.

Let $\phi: H \simeq U/K \to \cn$ be a $(\lambda,\mu)$-eigenfunction given by matrix coefficients of some finite-dimensional unitary representation $\rho: H \to U(V)$,
$$ \phi(x) = \trace_V A\rho(x)$$
for some $A \in \End (V)$. 
 We can lift $\phi$ to a $K$-invariant function $\hat \phi$ from $U =H \times H$ by $$\hat{\phi} (x_1 ,x_2) = \phi(x_1 \cdot x_2^{-1}) = \trace_V A \rho(x_1 x_2^{-1}),$$ which then extends holomorphically to $G^\cn$ by
 $$ \hat \phi^\cn = \trace_V(A \rho^\cn(z_1 z_2^{-1})).$$
 
The map 
$$ \iota : H^\cn \to G^\cn, \ \ z \mapsto (z,(z^*)^{-1})$$
 embeds $H^\cn$ into $(H \times H)^\cn$, so we define the eigenfunction $\psi: H^\cn  \to \cn$ by
\begin{eqnarray*}
\psi(z) &=& \hat \phi^\cn(z,(z^*)^{-1}) \\&=& \trace_V A  \rho^\cn(z) \rho^\cn(z^*).
\end{eqnarray*}

Since $(\rho,V)$ is a unitary representation of $H$ we have that $\rho^\cn(h)^* =\rho^\cn(h)^{-1} $ and so
$$\psi(zh) = \trace_V A  \rho^\cn(z) \rho^\cn(h) \rho^\cn(h^*) \rho^\cn(z^*) = \trace_V A  \rho^\cn(z) \rho^\cn(z^*) = \psi(z),$$
i.e., $\psi$ defines a function on $G/K \simeq H^\cn/H$.

{\it From now on we only consider one representation at a time, so, for simplicity of notation we will no longer explicitly write $\rho^\cn(z)$ for the representation of a group element and instead simply write $z$.}

\subsection{Zeros of induced functions} \label{subsection-zeros}
Let
$$ \psi(z) = \trace A z z^* $$
be an eigenfunction such that $A = u v^*$ is a rank-one matrix. Then

\begin{eqnarray*}
\trace(Azz^*) &=& \trace (u v^* z z^*) \\
&=& \trace ( v^* z z^* u) \\
&=& v^* z z^* u.
\end{eqnarray*}
Notice that $z z^*$ is a positive definite Hermitian matrix, so it defines an inner product. In particular, if $v$ and $u$ are linearly dependent then $\psi(z)$ never vanishes. Conversely, if they are linearly independent, and $n \geq 3$, then $\psi$ must have a zero in $\SOC{n}$.

\begin{lemma}\label{lemma-zeros-sonC}
	Let $u,v \in \mathbb{C}^n$ be linearly independent vectors for $n \geq 3$. Then there exists $g \in \SOC{n}$ such that $gu$ and $gv$ are orthogonal with respect to the standard Hermitian inner product.\end{lemma}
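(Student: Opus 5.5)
The plan is to recast the problem in terms of the complex \emph{bilinear} form $x\cdot y = x^T y$, which $\SOC n$ preserves, and then use Witt's extension theorem. Write $a = u\cdot u$, $b = u\cdot v$, $c = v\cdot v$. Any $g\in\SOC n$ preserves these three numbers. Conversely, suppose $u',v'$ are linearly independent with $u'\cdot u' = a$, $u'\cdot v' = b$, $v'\cdot v' = c$. Then the linear map $\mathrm{span}(u,v)\to\mathrm{span}(u',v')$ sending $u\mapsto u'$ and $v\mapsto v'$ is an isometry for the bilinear form. Witt's theorem then extends it to some $g\in\OC n$. So it suffices to do two things:
\begin{enumerate}
\item construct such a pair $(u',v')$ which is in addition orthogonal for the standard Hermitian product;
\item arrange $\det g = 1$.
\end{enumerate}

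For the construction I would work inside the first three coordinates, which is where $n\geq 3$ enters. Take
$$u' = (s,t,0,\dots,0),\qquad v' = (-\bar t\, w,\ \bar s\, w,\ r,0,\dots,0)$$
with $s,t,w,r\in\cn$. A one-line check gives $v'^*u' = -t\bar w s + s\bar w t = 0$, so $u'$ and $v'$ are Hermitian-orthogonal for every choice of parameters. The bilinear conditions become
$$s^2+t^2 = a,\qquad u'\cdot v' = w(t\bar s - s\bar t) = 2i\,w\,\Im(t\bar s) = b,\qquad w^2\bar a + r^2 = c.$$
I would solve these in order. First choose $(s,t)$ on the conic $s^2+t^2=a$ with $\Im(t\bar s)\neq 0$; this conic is a complex curve, so a generic point works, including when $a=0$, e.g.\ $(s,t)=(1,i)$. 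Next set $w = b/(2i\,\Im(t\bar s))$, and finally take $r$ to be a square root of $c - w^2\bar a$. Linear independence of $u',v'$ is automatic if $r\neq 0$. In the degenerate case $r=0$ one has to check that $(-\bar t w,\bar s w)$ is not a multiple of $(s,t)$, or else perturb the choice of $(s,t)$ on the conic, or use the fourth coordinate when $n\geq 4$.

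For the determinant, if the Witt extension $g$ has $\det g=-1$, I would compose it with a reflection $R_x$ in a non-isotropic vector $x$ that is bilinear-orthogonal to $\mathrm{span}(u',v')$. Such an $R_x$ fixes $u'$ and $v'$, so $R_x g$ still works and lies in $\SOC n$. The complement $\mathrm{span}(u',v')^\perp$ has dimension $n-2\geq 1$, but it might be totally isotropic, which can happen in low dimension. The remedy I would try is to exploit the remaining freedom in the construction: there is the choice of $(s,t)$, the sign of $r$, and coordinates $4,\dots,n$ when available. With this freedom one can make the complement contain a non-isotropic vector; for example, when $r$ is arranged so that $\mathrm{span}(u',v')$ avoids some coordinate direction $e_k$ in the bilinear sense, one takes $x=e_k$.

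I expect the main obstacle to be this last bookkeeping: handling the degenerate cases where $\mathrm{span}(u,v)$ is a degenerate quadratic plane, where $r=0$ forces dependence, and where the orthogonal complement is totally isotropic, especially for $n=3$. Witt's theorem itself applies to degenerate subspaces over $\cn$, so the core argument is unaffected, and only these edge cases need care.
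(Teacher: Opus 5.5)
Your architecture is the paper's: fix the bilinear Gram data, build a Hermitian-orthogonal target pair in the first three coordinates, extend it by Witt's theorem, then repair the determinant. Your family $u'=(s,t,0)$, $v'=(-\bar t w,\bar s w,r)$ neatly unifies the paper's cases: $(s,t)=(1,i)$ gives its isotropic case, and $(s,t)=\sqrt a(\cosh\theta,i\sinh\theta)$ gives its generic one.

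The genuine gap is the determinant step. You insist on a correction that fixes $u',v'$, and for $n=3$ this is impossible whenever $\mathrm{span}(u,v)$ is degenerate ($ac=b^2$ in your notation).
\begin{itemize}
\item Degeneracy depends only on the Gram matrix, so no choice of $(s,t)$ or of the sign of $r$ removes it.
\item For $W'=\mathrm{span}(u',v')$, the complement $(W')^{\perp}$ is then the radical of $W'$, an isotropic line. So there is no non-isotropic $x$ to reflect in.
\item In fact no element $g\in\OC 3$ fixing $W'$ pointwise has determinant $-1$. Let $e$ span the radical and pick $h$ with $h\cdot e=1$. Then $gh=\lambda h+w_0$ with $w_0\in W'$, $\det g=\lambda$, and $1=gh\cdot ge=gh\cdot e=\lambda$.
\item Hence all Witt extensions of $u\mapsto u'$, $v\mapsto v'$ share one determinant, and it can be $-1$. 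For $u=(1,-i,0)$, $v=e_3$, your recipe with $(s,t)=(1,i)$, $r=1$ gives $u'=(1,i,0)$, $v'=e_3$, and $\mathrm{diag}(1,-1,1)$ is an extension.
\end{itemize}
The missing observation is that you never need to fix $u',v'$; you only need to preserve Hermitian orthogonality. Replace $g$ by $Rg$ with $R=\mathrm{diag}(-1,1,\dots,1)$. This $R$ lies in $\OC n$, has determinant $-1$ and is unitary, so $(Rgu)^*(Rgv)=(gu)^*(gv)=0$. This is exactly how the paper concludes. Your ``sign of $r$'' freedom would also work when $r\neq 0$, but only because flipping $r$ replaces the target pair by its image under $\mathrm{diag}(1,1,-1,1,\dots,1)$, which is the same trick.

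There is also a smaller hole in the construction. If $u\cdot v=0=v\cdot v$, your recipe forces $w=0$ and $r=0$, so $v'=0$.
\begin{itemize}
\item Moving $(s,t)$ within $\Im(t\bar s)\neq 0$ keeps $w=0$, and for $n=3$ there is no fourth coordinate. Such pairs do exist in $\cn^3$, e.g.\ $u=e_3$, $v=(1,i,0)$.
\item The fix is to swap the roles of $u$ and $v$, as the paper does in its ``by symmetry'' case. Then the new $a$ and $b$ vanish, and the new $c=u\cdot u$ is non-zero because $\cn^3$ contains no totally isotropic plane, so $r\neq 0$.
\item When $r=0$ but $w\neq 0$, no check is needed: $(-\bar t w,\bar s w)$ is non-zero and Hermitian-orthogonal to $(s,t)$, hence independent of it.
\end{itemize}
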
 
\begin{proof}
	First recall that $\OC{n}$ is the isometry group of the complex bilinear inner product on $\cn^n$, so by Witt's theorem it is sufficient to define an isometry between $\text{span}\{u,v\}$ and its image $\text{span}\{x=gu,y=gv\}$ with the property that $x$ and $y$ are Hermitian-orthogonal.	
	
	 For notational convenience we only display the first coordinates of the vectors in $\cn^n$, as the other entries are all zero.
	 
	Now we show that such $x,y$ always exist. Let 
	\begin{eqnarray*}
		&a = u^t u, \ b = v^t v \text{ and } c = u^t v.&
	\end{eqnarray*}
	
	We will construct $x,y$ satisfying
	\begin{eqnarray}\label{eqn-xy}
		&a = x^t x, \ b = y^t y, \ c = x^t y& \text{and } x^*y=0.
	\end{eqnarray}
	We proceed by cases. \\

	First suppose that $a = b =c = 0$. Note that this case can only occur when $n \geq 4$, since the maximal isotropic subspace in $\cn^3$ is one-dimensional. Let

$$ x = \begin{pmatrix}
		1 \\ i \\ 0 \\ 0
	\end{pmatrix} \text{ and }  \ y = \begin{pmatrix}
		0 \\ 0 \\ 1 \\ i
	\end{pmatrix}$$
	which clearly satisfy equations (\ref{eqn-xy}).
	
	Now consider the case when $a = 0$ but $(b,c) \neq (0,0)$. Then choose $s \in \cn$ with $s^2=b$ and define
	$$ x = \begin{pmatrix}
		1 \\ i \\ 0
	\end{pmatrix} \text{ and }  \ y = \begin{pmatrix}
		\tfrac{c}{2} \\ -\tfrac{ic}{2}\\ s
	\end{pmatrix}.$$
A simple calculation shows $x$ and $y$ satisfy equations (\ref{eqn-xy}) and since $s \neq 0$ or $c \neq 0$, $x$ and $y$ are clearly linearly independent. By symmetry, an identical argument works when $b=0$ and $(a,c) \neq (0,0)$.

	Finally we need to consider the case when neither $a$ nor $b$ are zero. Choose $r \in \cn$ with $r^2 = a$ and $t \in \rn \setminus \{0\}$ then let 
	
		$$ x = r \cdot \begin{pmatrix}
		\cosh t \\ i \sinh t \\ 0
	\end{pmatrix} \text{ and }  \ \tilde y = \begin{pmatrix}
		i\sinh t\\ \cosh t\\ 0
	\end{pmatrix}.$$ 
	A computation shows
 \begin{eqnarray*}
 	& x^t x = r^2 = a, \  \tilde y^t \tilde y =1, \  x^t \tilde y= 2i r \cosh t \sinh t = i r \sinh 2t&, \ x^* \tilde y=0.
 \end{eqnarray*}
 Since neither $r$ nor $t$ are zero we have that $x^t \tilde y \neq 0$ so we can pick some $s \in \cn$ such that $s^2 = b-\frac{c^2}{(x^t \tilde y)^2}$ and finally define
 $$ y = \tfrac{c}{x^t \tilde y} \cdot \tilde y + s \cdot \begin{pmatrix}
 	0 \\ 0 \\ 1
 \end{pmatrix}.$$
 It is then clear that $x$ and $y$ satisfy equations (\ref{eqn-xy}).
 
 So, by Witt's theorem there always exists a $G \in \OC{n}$ with the required properties. If $\det G =1$ let $g = G$. Otherwise, if $\det G = -1$ let $R = \text{diag}(-1,1,\dots ,1) \in \OC{n}$ and set $g = RG \in \SOC{n}$. Since $R$ is unitary we have that
	$$ (RGu)^*(RGv) = (Gu)^*R^*R(Gv) =(Gu)^*Gv=0.$$

\end{proof}

\subsection{Critical points in the fibre over $0$}
In this section we show that the differential of a $\mu$-eigenfunction must have rank $0$ or $2$ on the fibre over $0$. This is a known result, but is presented here for the convenience of future readers.

\begin{proposition}
	Let $\phi: M \to \cn$ be an eigenfunction of the conformality operator $\kappa$ such that $0$ is in the image of $\phi$. Then $0$ is a regular value of $\phi$ so long as the (complexified) gradient $\nabla \phi$ does not vanish on $\phi^{-1}(\{0\})$.
\end{proposition}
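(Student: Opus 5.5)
Write $\phi = u + iv$ with $u,v$ real-valued, and let $x \in \phi^{-1}(\{0\})$. Regularity of $0$ means that $d\phi_x : T_xM \to \cn \cong \rn^2$ is surjective, which is the same as saying that the real gradients $\nabla u$ and $\nabla v$ are linearly independent at $x$. The plan is to extract this from the eigenfunction equation $\kappa(\phi,\phi)=\mu\,\phi^2$ evaluated on the fibre, where the right-hand side vanishes.

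First, because $\kappa$ is the complex bilinear extension of $g$, I expand
$$\kappa(\phi,\phi) = g(\nabla u,\nabla u) - g(\nabla v,\nabla v) + 2i\,g(\nabla u,\nabla v).$$
At any point $x$ with $\phi(x)=0$ we have $\mu\,\phi(x)^2=0$. Taking real and imaginary parts therefore gives
$$|\nabla u|^2 = |\nabla v|^2 \quad\text{and}\quad g(\nabla u,\nabla v)=0 \quad \text{at } x.$$
So on the fibre $\nabla u$ and $\nabla v$ are orthogonal and of equal length.

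Second, the hypothesis says that $\nabla\phi = \nabla u + i\nabla v$ is nonzero at $x$, so at least one of $\nabla u$, $\nabla v$ is nonzero. Equal lengths then force both to be nonzero. Two nonzero orthogonal vectors are linearly independent, so $d\phi_x$ has real rank $2$ and is surjective. Since $x$ was an arbitrary point of the fibre, $0$ is a regular value.

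As a byproduct, the same computation shows that on $\phi^{-1}(\{0\})$ the rank of $d\phi$ is either $0$ (when $\nabla\phi=0$) or $2$, which is the dichotomy announced before the statement. There is no real obstacle here. The only step needing care is confirming that $\kappa$ is the bilinear, not the Hermitian, extension of $g$, so that the mixed term appears as $2i\,g(\nabla u,\nabla v)$; this is exactly how $\kappa$ was defined in Section 2.
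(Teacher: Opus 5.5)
Your proof is correct and follows the paper's approach: write $\phi=u+iv$ and use $\kappa(\phi,\phi)=\mu\phi^2=0$ on $\phi^{-1}(\{0\})$ to rule out $d\phi$ having rank one. The only difference is presentation. You read off directly that $\nabla u$ and $\nabla v$ are orthogonal and of equal length, whereas the paper argues by contradiction: it assumes $\nabla v=c\,\nabla u$ (or $\nabla u=0$) and shows this forces $\kappa(\phi,\phi)\neq 0$.
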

\begin{proof}
	First notice that the rank of $d\phi$ is equal to the dimension of the span of $\nabla u,\nabla v$ where $\phi = u + iv$.
	
	It remains to show that $d\phi$ cannot be rank-one at any point of $\phi^{-1}(\{0\})$. Suppose, towards a contradiction, that $d\phi_p$ has rank-one and thus $\nabla u_p$ and $\nabla v_p$ are proportional. If $\nabla u_p \neq 0$ we can compute
	\begin{eqnarray*}
		\kappa(\phi,\phi)(p) &=& g(\nabla u + i \cdot  \nabla v, \nabla u + i \cdot \nabla v)_p \\
		&=& g(\nabla u + i \cdot c \cdot  \nabla u, \nabla u + i \cdot c \cdot \nabla u)_p \\
		&=& (1-c^2)|\nabla u|_p^2 + 2i\cdot c\cdot  |\nabla u|_p^2 \neq 0.
	\end{eqnarray*}
	If $\nabla u_p = 0$, then we have that 
	\begin{eqnarray*}
		\kappa(\phi,\phi)(p) &=& g(i \cdot  \nabla v, i \cdot \nabla v)_p \\
		&=& - |\nabla v|_p^2 \neq 0.
	\end{eqnarray*}
	However, since $\phi$ is eigen,
	$$ \kappa(\phi,\phi)(p) = \mu \cdot (\phi(p))^2 = 0$$
	so we arrive at a contradiction. 
\end{proof}

The following lemma describes the points $z$ at which the gradient of an eigenfunction vanishes. In particular, it describes all the possible critical points in the fibre $\phi^{-1}(\{0\})$.
\begin{lemma}\label{lemma-critical-point}
	Let $\psi: H^\cn/H \to \cn$ be an eigenfunction given by
	$$ \psi(z) = \trace (A z z^*).$$
	Then  $\nabla\psi (z) = 0$ if and only if 
	$$ \langle z^* A z, Z \rangle =0$$ 
	for all $ Z \in \h^\cn$.
\end{lemma}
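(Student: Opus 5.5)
The plan is to compute $d\psi$ at $z$ along the geodesic directions of $H^\cn/H$ and use homogeneity to move everything to the identity coset. Recall that the Cartan decomposition of the complex Lie algebra is $\h^\cn = \h \oplus i\h$, where $\h$ is the isotropy algebra and $\m$ is identified with $i\h$, the Hermitian elements. Since $G = H^\cn$ acts transitively by left multiplication, the tangent space at the coset $zH$ is spanned by the velocities of the curves
$$ t \mapsto z\exp(tX)H, \qquad X \in i\h. $$
These are exactly the images under $dL_z$ of the geodesics through the identity coset. Because $\nabla\psi$ is the complexified gradient of a complex-valued function, $\nabla\psi(z)=0$ if and only if $d\psi_z$ vanishes on the real tangent space, equivalently on all these directions with $X$ ranging over $i\h$.

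The key computation is the following. For $X \in i\h$ we have $X^* = X$, so $(z e^{tX})(z e^{tX})^* = z e^{2tX} z^*$. Therefore
$$ \frac{d}{dt}\Big|_{t=0} \psi(z e^{tX}) = 2\,\trace(A z X z^*) = 2\,\trace(z^* A z\, X). $$
The representation $\rho^\cn$ is suppressed here, as the paper allows, and $X$ stands for $d\rho(X)$. Consequently $\nabla\psi(z)=0$ if and only if $\trace(z^*Az\,X)=0$ for all $X\in i\h$.

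The final step upgrades this real-linear condition to a statement over $\h^\cn$. The map $X\mapsto \trace(z^*Az X)$ is complex-linear in $X$, and $i\h$ spans $\h^\cn$ over $\cn$. Hence vanishing on $i\h$ is equivalent to vanishing on $\h^\cn$, with no separate treatment of the isotropy directions needed. I then interpret $\langle z^*Az, Z\rangle$ as the complex-bilinear trace pairing $\trace(z^*Az\,Z)$ restricted to $\h^\cn$, or equivalently via the Hermitian form with $Z$ replaced by $Z^*$. Since $\h^\cn$ is closed under $*$, the two conditions coincide.

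I expect the main obstacle to be bookkeeping rather than anything conceptual. One point is confirming that the directions $z e^{tX}$ with $X$ Hermitian exhaust the tangent space. Another is checking that no metric normalisation, such as a constant multiple of the Killing form, affects whether the gradient vanishes; it cannot, since only the kernel of $d\psi$ matters. A third is checking that the image $d\rho(\h^\cn)$ is what the pairing is taken against.
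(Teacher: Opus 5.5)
Your proposal is correct and is essentially the paper's argument: differentiate $\psi$ along $t\mapsto z e^{tX}$, use $X^*=X$ for $X\in i\h$ to obtain $2\trace(z^*AzX)$, and then pass from $i\h$ to $\h^\cn$. The paper also differentiates along $\h$, where $\trace(z^*Az(Z+Z^*))$ vanishes trivially, and it phrases the last step with the real inner product $\Re\trace(WZ^*)$ by pairing against both $Y$ and $iY$; this is equivalent to your complex-linearity step because $d\rho(\h^\cn)$ is a complex subspace closed under $*$.
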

\begin{proof}
	First recall that for matrix representations we have that $Z(z) = z Z$ and $Z(z^*) = (z Z)^*$.
	Since $Z \in B_\k \cup i \cdot B_\m$ in $\h \oplus i \cdot \h$ we can compute
	\begin{eqnarray*}
		Z( \trace (A z z^*)) &=&  \trace (A Z(z) z^*) +  \trace (A z Z(z^*)) \\
		&=&  \trace (A z Z z^*) +  \trace (A z Z^* z^*) \\
		&=&  \trace (A z (Z+Z^*) z^*) \\
		&=&  \trace ( z^* A z (Z+Z^*)).
	\end{eqnarray*}
	
	Since $\h$ is one of $\so{n}, \su{n}$ or $\sp{n}$ we have that for any $X \in B_\k$, $X^* = -X$ and so the above equation vanishes.
	Similarly for $i \cdot Y \in i \cdot B_\m$ we have $(iY)^* = -iY^* = iY$ so we get that
	
	\begin{eqnarray*}
	(iY)( \trace (A z z^*)) &=& 2i \cdot \trace (z^* A z Y) \\
	&=& 2i (\Re \trace (z^* A z Y) + i \Im \trace (z^* A z Y)) \\
	&=& -2i (\Re \trace (z^* A z Y^*) + i \Im \trace (z^* A z Y^*)) \\
	&=& -2i (\Re \trace (z^* A z Y^*) + i \Re \trace (z^* A z (iY)^*)) \\
	&=& -2i ( \langle z^* A z,Y \rangle + i \langle z^* A z, iY \rangle)
	\end{eqnarray*}
which vanishes if and only if
$$ \langle z^* A z,Y \rangle = \langle z^* A z, iY \rangle = 0, $$
i.e.,
$$  z^* A z \perp d\rho(\h^\cn).$$
\end{proof}

\section{The Riemannian Lie Group $\GLC n$}

The main purpose of this short section is to introduce some useful notation.  The complex general linear group is given by 
$$\GLC{n}=\{z\in\cn^{n\times n}\ |\ \det z \neq 0\}.$$
Its Lie algebra $\glc n$ of left-invariant vector fields can be identified with the tangent space at the identity element $e\in\GLC n$, namely the $n\times n$ complex matrices in $\cn^{n\times n}$.  We equip $\GLC n$ with its standard Riemannian metric induced by the Euclidean scalar product
on the Lie algebra $\glc n$ given by
$$g(Z,W)=\Re\trace (ZW^*).$$  
For $1\le i,j\le n$ we denote by $E_{ij}$ the element of $\glr n$ satisfying
$$(E_{ij})_{kl}=\delta_{ik}\delta_{jl}$$ 
and by $D_t$ the diagonal matrix $D_t=E_{tt}$. For $1\le r<s\le n$ let $X_{rs}$, $Y_{rs}$ and $D_{rs}$ be the matrices satisfying
$$X_{rs}=\frac 1{\sqrt 2}(E_{rs}+E_{sr}),\ \ Y_{rs}=\frac
1{\sqrt 2}(E_{rs}-E_{sr}), \ \ D_{rs} = \frac{1}{\sqrt 2} (E_{rr}-E_{ss}).$$

\section{The Real Cases $\SO n$ and $\SOC n/\SO n$}
\label{section-real-cases}

The Lie algebra $\so n$ of $\SO n$ is the set of real  skew-symmetric matrices
$$\so n=\{X\in\glr n\ |\ X+X^*=0\}$$ 
with canonical orthonormal basis
$$\B_{\so n}=\{Y_{rs}\ |\ 1\le r<s\le n\}.$$
Its complexified Lie algebra $\soc{n}$ consists of all complex skew-symmetric matrices, so its orthogonal complement in $\glc{n}$ is the space of all complex symmetric matrices.
The gradient $\nabla\phi$ of a complex-valued function $\phi:\SO n\to\cn$, is an element of the complexified tangent bundle $T^\cn\SO n$.  This satisfies 
$$\nabla\phi=\sum_{Y\in\B_{\so n}}Y(\phi)\cdot Y.$$

The simplest examples of eigenfamilies on $\SO{n}$ come from the standard representation.
\begin{theorem}[\cite{Gud-Sak-1}]
	Let $V$ be a maximal isotropic subspace of $\cn^n$ and $p \in \cn^n$ be a non-zero element. Then the set 
	$$ \E_V(p) = \{ \phi_a : \SO{n} \to \cn \mid \phi_a (x)= \trace (p a^* x), \ a\in V \}$$
	of complex-valued functions is a $(-\tfrac{(n-1)}{2},-\tfrac{1}{2})$-eigenfamily on $\SO{n}$.
	\end{theorem}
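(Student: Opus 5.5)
We have to prove that for a maximal isotropic subspace $V$ of $\cn^n$ and non-zero $p\in\cn^n$, the family $\E_V(p)=\{\phi_a(x)=\trace(pa^*x)\mid a\in V\}$ is a $(-\tfrac{n-1}{2},-\tfrac12)$-eigenfamily on $\SO n$. The plan is a direct computation with the left-invariant vector fields $Y_{rs}$, $1\le r<s\le n$. These form an orthonormal basis, so
$$\tau(\phi)=\sum_{r<s}Y_{rs}^2(\phi),\qquad \kappa(\phi,\psi)=\sum_{r<s}Y_{rs}(\phi)\,Y_{rs}(\psi).$$
This holds because the metric is bi-invariant, hence $\nabla_Y Y=0$ for each left-invariant $Y$. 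Since $Y(x)=xY$, we get
$$Y(\phi_a)(x)=\trace(pa^*xY),\qquad Y^2(\phi_a)(x)=\trace(pa^*xY^2).$$
Write $w=a^*x$ (a row vector) and put $\bar a=(a^*)^t$. Both quantities then reduce to sums over $r<s$ involving the entries of $w$ and $p$.

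\emph{Laplacian.} We use the identity
$$\sum_{r<s}Y_{rs}^2=-\tfrac{n-1}{2}I,$$
which follows from $Y_{rs}^2=-\tfrac12(E_{rr}+E_{ss})$: each index lies in exactly $n-1$ pairs. Hence
$$\tau(\phi_a)=-\tfrac{n-1}{2}\trace(pa^*x)=-\tfrac{n-1}{2}\,\phi_a,$$
with no condition on $a$ or $p$.

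\emph{Conformality.} For the pairing we use the standard contraction
$$\sum_{r<s}(Y_{rs})_{ij}(Y_{rs})_{kl}=\tfrac12(\delta_{ik}\delta_{jl}-\delta_{il}\delta_{jk}).$$
This gives
$$\kappa(\phi_a,\phi_b)=\sum_{r<s}\trace(pa^*xY_{rs})\trace(pb^*xY_{rs})=\sum_{r<s}(wY_{rs}p)(w'Y_{rs}p),$$
where $w=a^*x$ and $w'=b^*x$. The contraction then yields
$$\kappa(\phi_a,\phi_b)=\tfrac12\bigl[(w\,w'^t)(p^tp)-(w p)(w' p)\bigr].$$
Since $w p=\phi_a$ and $w'p=\phi_b$, the second term is $-\tfrac12\phi_a\phi_b$. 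For the first term, $x\in\SO n$ is real orthogonal, so $xx^t=I$ and
$$w\,w'^t=a^*xx^t\bar b=a^*\bar b=\overline{a^t b}.$$
This vanishes because $V$ is isotropic for the complex bilinear form, so $a^tb=0$ for all $a,b\in V$. It follows that
$$\kappa(\phi_a,\phi_b)=-\tfrac12\phi_a\phi_b$$
for all $a,b\in V$, which is the claim.

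The main obstacle is the sign and index bookkeeping in the contraction identity and in the ordering $wY_{rs}p$. I will check the identity directly on the components of $Y_{rs}$. I will also verify that the conjugation convention $a^*$ is the right one to make $w\,w'^t$ equal to the conjugate of $a^tb$, so that the isotropy of $V$ is exactly what is used.

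Maximality of $V$ is never needed for the eigen-equations themselves; any isotropic $V$ works. I expect maximality is assumed only so that the family is as large as possible.
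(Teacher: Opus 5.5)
Your proof is correct. Both identities you rely on, $\sum_{r<s}Y_{rs}^2=-\tfrac{n-1}{2}I$ and $\sum_{r<s}(Y_{rs})_{ij}(Y_{rs})_{kl}=\tfrac12(\delta_{ik}\delta_{jl}-\delta_{il}\delta_{jk})$, check out. Together they give $\kappa(\phi_a,\phi_b)=\tfrac12\,\overline{a^tb}\,(p^tp)-\tfrac12\phi_a\phi_b$, so, as you say, isotropy of $V$ is exactly what is needed and maximality plays no role.

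The paper does not prove this statement; it quotes it from Gudmundsson and Sakovich. The standard argument there is essentially your computation, organised slightly differently: one first computes $\tau(x_{ij})=-\tfrac{n-1}{2}x_{ij}$ and $\kappa(x_{ij},x_{kl})=-\tfrac12(x_{il}x_{kj}-\delta_{ik}\delta_{jl})$ for the matrix coefficients, and then extends bilinearly.
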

By duality, we have that the set
$$ \F_V(p) = \{ \psi_a : \SOC{n}/\SO{n} \to \cn \mid \psi_a(z) = \trace (p a^* {z} z^{*}), \ a\in V \}$$
 is a  $(\tfrac{(n-1)}{2},\tfrac{1}{2})$-eigenfamily on $\SOC{n}/\SO{n}$.
 
\begin{remark}
	Before proceeding with the proof of Theorem \ref{thm-so-minimal}, we first consider the degenerate case $n=2$. With respect to the standard basis of $\cn^2$ the two isotropic directions are spanned by the vectors
$$ e_+ = \begin{pmatrix}
	1 \\ i
\end{pmatrix} \text{ and } e_- = \begin{pmatrix}
	1 \\ -i
\end{pmatrix}.$$
Notice that $e_+$ and $e_-$ are Hermitian-orthogonal and $e^t_+ e_- = 2$.
After rescaling, we can assume that $a = e_+$ and  $p = \alpha \cdot e_+ + \beta \cdot e_-$ where 
$\alpha,\beta \in \cn \setminus \{0\}$. Since the action of $\SOC{2}$ preserves isotropic lines we have that for any $z \in \SOC{2}$
$$ z e_+ = \gamma \cdot e_+, \ \ z (\alpha \cdot e_+ + \beta \cdot e_-) = \gamma \alpha \cdot e_+ + \gamma^{-1} \beta \cdot e_-$$
for some non-zero $\gamma$, and so
$$ \psi_a(z) = (z^* a)^* (z^* p) = 2 \alpha |\gamma|^2 \neq 0.$$
Note that if we allow $p$ to be isotropic, then $\psi_a$ is nowhere-zero, or the function is identically zero when $p \in \cn e_+$ or $p\in \cn e_-$ respectively, so $\psi^{-1}(\{0\})$ will either be the empty set or all of $\SOC{2}/\SO{2}$.
\end{remark}

\begin{proof}[Proof. (Theorem \ref{thm-so-minimal})]
Since $a$ is isotropic and $p$ is not, they are linearly independent and so Lemma \ref{lemma-zeros-sonC} ensures that there exists $z \in \SOC{n}$ such that $\psi_a(z) = 0$. 

By Lemma \ref{lemma-critical-point} we have that $z \in \SOC{n}$ is a critical point of $\psi_a = \trace (p a^* {z} z^{*})$ if
$$ \langle z^* p a^* z, Z \rangle = 0$$
for all $Z \in \soc{n}$.
In particular, this only occurs when $z^* p a^* z \in \soc{n}^\perp$, i.e. whenever $z^* p a^* z$ is symmetric. Since $z$ and $z^*$ are invertible, they have full rank, and $z^* p a^* z$ has rank-one at each point of $\SOC{n}$, so there exists $u,v \in \cn^n$ such that
\begin{eqnarray*}
	z^* p a^* z &=& u v^t \\
	&=& (u v^t)^t \\
	&=& v u^t
\end{eqnarray*}
so $u$ and $v$ are linearly dependent, $v = c \cdot u$.
If $\psi_a(z) = 0$, then we additionally require that $z^* p a^* z = c\cdot u u^t$ be trace-free, so $u$ must be an isotropic vector. In particular, since whenever $ a b^t = c d^t$ we have that the pairs of vectors $(a,c)$ and $(b,d)$ must be proportional to each other, and since the orbit of an isotropic vector under $\SOC{n}$ consists only of isotropic vectors, we must have that both $p$ and $a$ are isotropic, which contradicts the assumption that $p$ is non-isotropic.

\end{proof}

\section{The Complex Cases $\SU n$ and $\SLC n/\SU n$}
\label{section-complex-cases}

The Lie algebra $\su n$ of $\SU n$ is the set of the trace-free complex skew-Hermitian matrices
$$\su n=\{Z\in\glc n |\ Z+ Z^*=0, \ \trace Z = 0\}$$ 
 which is spanned by the set
$$ \B_{\su{n}} = \{ Y_{rs}, i X_{rs}  \mid 1 \leq r  < s \leq n\} \cup \{ iD_{1t} \mid 1 < t \leq n\}.$$ 

Its complexified Lie algebra $\slc{n}$ consists of all $n \times n$ trace-free complex matrices, which has orthogonal complement spanned by the identity matrix $I_n$ in $\glc{n}$.

The simplest examples of eigenfamilies on $\SU{n}$ come from the standard representation.
\begin{theorem}[\cite{Gud-Sak-1}]
Let $p \in \cn^n$ be a non-zero element. Then the set 
	$$ \E(p) = \{ \phi_a : \SU{n} \to \cn \mid \phi_a(z)= \trace (p a^* z), \ a\in \cn^n \}$$
	of complex-valued functions is a $(-\tfrac{(n^2-1)}{n},-\tfrac{n-1}{n})$-eigenfamily on $\SU{n}$.
	\end{theorem}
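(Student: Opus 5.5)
The plan is to verify the two eigen-equations of Definition \ref{definition-eigenfamily} by direct computation with left-invariant vector fields on $\SU n$, which carries the bi-invariant metric $g(Z,W)=\Re\trace(ZW^*)$. For a left-invariant field $Z\in\su n$ we have $Z(x)=xZ$. Hence
$$Z(\phi_a)(x)=\trace(pa^*xZ)\quad\text{and}\quad Z^2(\phi_a)(x)=\trace(pa^*xZ^2).$$
The metric is bi-invariant, so $\nabla_ZZ=0$ for left-invariant $Z$. The tension field therefore reduces to $\tau(\phi)=\sum_{Z\in\B}Z^2(\phi)$, and the conformality operator to $\kappa(\phi,\psi)=\sum_{Z\in\B}Z(\phi)Z(\psi)$, where $\B$ is any orthonormal basis of $\su n$. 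Both equations then reduce to two algebraic Casimir-type identities in $\glc n$.

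\emph{Tension field.} I will compute $\sum_{Z\in\B}Z^2$ for an orthonormal basis of $\su n$. The off-diagonal elements $Y_{rs}$ and $iX_{rs}$ each satisfy
$$Y_{rs}^2=(iX_{rs})^2=-\tfrac12(E_{rr}+E_{ss}).$$
Summing over $r<s$ therefore gives $-(n-1)I_n$.

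For the diagonal part, the elements $iD_{1t}$ listed in $\B_{\su n}$ are not mutually orthogonal, so I replace them with $iH_1,\dots,iH_{n-1}$, where $\{H_k\}$ is an orthonormal basis of the real trace-free diagonal matrices. The identity $\sum_k (H_k)_{jj}^2=1-\tfrac1n$ holds because it is the diagonal entry of the orthogonal projection onto the complement of $I_n/\sqrt n$. This gives $\sum_k (iH_k)^2=-\tfrac{n-1}{n}I_n$.

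Altogether $\sum_{Z\in\B}Z^2=-\tfrac{n^2-1}{n}I_n$. Hence $\tau(\phi_a)=-\tfrac{n^2-1}{n}\phi_a$ for every $a\in\cn^n$.

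\emph{Conformality operator.} On $\su n$ we have $g(Z,W)=-\trace(ZW)$, so the complexified orthonormal-basis tensor is
$$\sum_{Z\in\B}Z\otimes Z=-\Big(\sum_{i,j}E_{ij}\otimes E_{ji}-\tfrac1n I_n\otimes I_n\Big),$$
the minus sign and trace correction coming from identifying $\su n^\cn=\slc n$. Consequently, for all $A,B$,
$$\sum_{Z\in\B}\trace(AZ)\trace(BZ)=-\trace(AB)+\tfrac1n\trace A\,\trace B.$$
I apply this with $A=pa^*x$ and $B=pb^*x$. Both are rank one, so
$$\trace(AB)=(a^*xp)(b^*xp)=\phi_a\phi_b,\qquad \trace A\,\trace B=\phi_a\phi_b.$$
This yields $\kappa(\phi_a,\phi_b)=-\tfrac{n-1}{n}\phi_a\phi_b$, as required.

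The main point of care is the second identity. One must get the sign right (the trace form is negative definite on $\su n$) and keep the $\tfrac1n$ trace correction, which is exactly what converts the naive value $-1$ into $-\tfrac{n-1}{n}$. I would check it by verifying separately the contribution of the off-diagonal basis elements and of the diagonal ones, in the same way as the Casimir computation above.
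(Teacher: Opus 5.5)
The paper gives no proof of this statement. It quotes it from \cite{Gud-Sak-1} and uses it only as input for the duality construction, so there is no in-paper argument to match.

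Your verification is correct and complete. The identity $\sum_{Z\in\B}Z\otimes Z=-\bigl(\sum_{i,j}E_{ij}\otimes E_{ji}-\tfrac1n I_n\otimes I_n\bigr)$ is right: contracting it reproduces your direct value $\sum_{Z}Z^2=-\tfrac{n^2-1}{n}I_n$. You were also right to replace the $iD_{1t}$ by an orthonormal diagonal basis, since they are not mutually orthogonal and the paper only lists them as a spanning set. Finally, you used the normalisation $g(Z,W)=\Re\trace(ZW^*)$, which is the one for which the stated constants hold; a Killing-form normalisation would rescale both eigenvalues.

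The standard route in \cite{Gud-Sak-1} works with the matrix coefficients $z_{j\alpha}$ of the standard representation. It computes $\tau(z_{j\alpha})$ and $\kappa(z_{j\alpha},z_{k\beta})$, with the $\tfrac1n$ terms coming from discarding the central direction of $\U n$, and then extends bilinearly. Your Casimir formulation performs the same computation basis-free, and it exposes the mechanism. The $\kappa$-equation reduces to $\trace(AB)=\trace A\,\trace B$ for $A=pa^*x$ and $B=pb^*x$, which holds because both are rank one with the same image $\cn p$. This also explains why $p$ must be fixed across the family while $a$ ranges freely over $\cn^n$.
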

	
Thus we obtain an eigenfamily
$$ \F(p) =\{\psi_a : \SLC{n}/\SU{n} \to \cn \mid \psi_a = \trace (p a^* {z} z^*), \ a \in \cn^n\}.$$

\begin{proof}[Proof. (Theorem \ref{thm-slc-minimal})]
	First we show that $0$ is in the image of $\psi_a$ if and only if $p$ and $a$ are linearly independent. The computation at the start of subsection \ref{subsection-zeros} shows the case when the vectors are dependent. Now suppose that $a$ and $p$ are linearly independent and extend them to a basis of $\cn^n$
	$$ \{ a,p,v_3, \dots v_n \}.$$
	Let $$D=det(a,p,v_3,\dots,v_n) \neq 0.$$ 
	Then define $g$ to be the unique linear map that sends this basis to 
	$$ \{ D\cdot e_1, e_2, \dots e_n\},$$
	where $e_i$ form an orthonormal basis for the standard Hermitian inner product of $\cn^n$.
	Then we see that 
	$$ \det g = \frac{\det (D\cdot e_1, \dots e_{n-1},e_n)}{\det(a,p,v_3,\dots,v_n)} = \frac{D}{D} = 1$$
	so $g \in \SLC{n}$. Then if we let $ z = g^*$ we have that
	$$\psi_a(z) = a^* z z^* p = (ga)^* (gp) = \bar D \cdot e_1^* e_2 = 0.$$
	
By Lemma \ref{lemma-critical-point} we have that the gradient of $\psi_a = \trace (p a^* {z} z^{*})$ vanishes at $z \in \SLC{n}$   if
$$ \langle z^* p a^* z, Z \rangle = 0$$
for all $Z \in \slc{n}$. In particular this only occurs when $z^* p a^* z \in \slc{n}^\perp = \{ c \cdot I_n \}$. Since $z$ and $z^*$ are invertible, they have full rank and therefore $z^* p a^* z$ has rank-one at each point of $\SLC{n}$, while $I_n$ is full rank. So $\nabla \psi_a(z)$ never vanishes, and in particular $\psi_a$ has no critical points in the fibre over $0$.
\end{proof}

\begin{example}
Suppose that $p = e_i, a = e_j$ then we have that
$$ \psi_{ij} = \trace (e_i e_j^t  {z} z^*) = (z z^*)_{ji} = \langle z_j, z_i \rangle_\cn $$
where $z_j,z_i$ denote the $j$-th and $i$-th rows of $z$ respectively. Notice that when $i = j$ we get $|z_i|^2 \neq 0$ for all $z \in \SLC{n}$ and whenever $i \neq j$ we have that $\psi_{ij}(I) = 0$.

\end{example}

\section{The Quaternionic Cases $\Sp n$ and $\SpC {n}/\Sp n$}
\label{section-quaternionic-cases}
The Lie algebra $\sp n$ of $\Sp n$ satisfies
$$\sp{n}=\{\begin{pmatrix} Z & W
\\ -\bar W & \bar Z\end{pmatrix}\in\cn^{2n\times 2n}
\ |\ Z^*+Z=0,\ W^t-W=0\}.$$ 

For the indices $1\le r<s\le n$ and $1\le t\le n$ we now  introduce the following notation for the elements of the orthonormal basis $\B_{\sp n}$ of the Lie algebra $\sp n$ of the quaternionic unitary group $\Sp n$:
$$Y^a_{rs}=\frac 1{\sqrt 2}
\begin{pmatrix}
Y_{rs} & 0 \\
     0 & Y_{rs}
\end{pmatrix},\  
X^a_{rs}=\frac 1{\sqrt 2}
\begin{pmatrix}
iX_{rs} & 0 \\
      0 & -iX_{rs}
\end{pmatrix},$$
$$ X^b_{rs}=\frac 1{\sqrt 2}
\begin{pmatrix}
      0 & iX_{rs} \\
iX_{rs} & 0\end{pmatrix},\ 
X^c_{rs}=\frac 1{\sqrt 2}
\begin{pmatrix}
      0 & X_{rs} \\
-X_{rs} & 0
\end{pmatrix},$$
$$D^a_{t}=\frac 1{\sqrt 2}
\begin{pmatrix}
iD_{t} & 0 \\
     0 & -iD_{t}
\end{pmatrix},
D^b_{t}=\frac 1{\sqrt 2}
\begin{pmatrix}
     0 & iD_{t}  \\
iD_{t} & 0
\end{pmatrix},
D^c_{t}=\frac 1{\sqrt 2}
\begin{pmatrix}
     0 & D_{t}  \\
-D_{t} & 0
\end{pmatrix}.$$
\smallskip

The simplest examples of eigenfamilies on $\Sp{n}$ come from the standard representation.
\begin{theorem}[\cite{Gud-Sak-1}]
Let $p \in \cn^n$ be a non-zero element. Then the set 
	$$ \E(p) = \{ \phi_{ab} : \Sp{n} \to \cn \mid \phi_{ab}(g)= \trace (p a^* z^t+pb^*w^t), \ a,b\in \cn^n \}$$
	of complex-valued functions is a $(-\tfrac{(2n+1)}{2},-\tfrac{1}{2})$-eigenfamily on $\Sp{n}$.
\end{theorem}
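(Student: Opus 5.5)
The plan is to compute the two operators directly using left-invariant vector fields, as in the earlier sections. Writing $g=\begin{pmatrix} z & w\\ -\bar w & \bar z\end{pmatrix}\in\Sp n$, I first rewrite
$$\phi_{ab}(g)=\trace(pa^*z^t+pb^*w^t)=\trace(A_{ab}\, g)$$
for a suitable rank-one $2n\times 2n$ matrix $A_{ab}$. Since $\trace(pa^*z^t)=a^*z^tp=p^tz\bar a$, this is $A_{ab}=\begin{pmatrix}\bar a\\ \bar b\end{pmatrix}\begin{pmatrix}p^t & 0\end{pmatrix}$, up to bookkeeping of the block positions. So every $\phi_{ab}$ is a matrix coefficient of the standard representation of the form $\trace(Ag)$ with $A=uv^t$, where $v=(p,0)$ is fixed and $u=(\bar a,\bar b)$ varies.

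For $X\in\B_{\sp n}$ the left-invariant field satisfies $X(g)=gX$. This gives
$$X(\phi)=\trace(AgX),\qquad X^2(\phi)=\trace(AgX^2).$$
Hence
$$\tau(\phi)=\trace\Big(Ag\sum_{X\in\B_{\sp n}}X^2\Big),\qquad \kappa(\phi_{ab},\phi_{cd})=\sum_{X\in\B_{\sp n}}\trace(A_{ab}gX)\,\trace(A_{cd}gX).$$

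For the tension field, I will compute the Casimir element $\sum_X X^2$ directly from the listed basis $Y^a_{rs},X^a_{rs},X^b_{rs},X^c_{rs},D^a_t,D^b_t,D^c_t$. This is a routine sum of squares of elementary matrices, and it should give the scalar $-\tfrac{2n+1}{2}I_{2n}$. That yields $\lambda=-\tfrac{2n+1}{2}$.

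The conformality operator is the main step. I will establish a general contraction identity: for $M,N\in\cn^{2n\times 2n}$,
$$\sum_{X\in\B_{\sp n}}\trace(MX)\trace(NX)=-\tfrac14\trace(MN)+\tfrac14\trace(MJN^tJ^{-1}),\qquad J=\begin{pmatrix}0&I\\-I&0\end{pmatrix},$$
with the signs and constants to be pinned down from the basis. This holds because the orthogonal projection of $\glc{2n}$ onto $\sp n^\cn=\{X : X^tJ+JX=0\}$ is $M\mapsto\tfrac12(M-JM^tJ^{-1})$, and the normalisation of $\B_{\sp n}$ fixes the factor.

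I then substitute $M=A_{ab}g=\bar u\,(v^tg)$ and $N=A_{cd}g$, both of which are rank one with the same right factor $v^tg$. In the first term, $\trace(MN)=(v^tg\bar u)(v^tg\bar u')$, which is exactly $\phi_{ab}\phi_{cd}$. The second term reduces to a multiple of $(v^tgJ g^tv)$. Since $g$ is symplectic, $gJg^t=J$, so this is $v^tJv=0$ because $J$ is skew. Only the first term survives, giving $\kappa(\phi_{ab},\phi_{cd})=-\tfrac12\phi_{ab}\phi_{cd}$ once the constant is corrected.

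I expect the main obstacle to be the exact constants and signs in the contraction identity. Getting these right should confirm $\mu=-\tfrac12$, with the cancellation of the second term coming from $gJg^t=J$ and the skewness of $J$.
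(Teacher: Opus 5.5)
Your argument is correct once the open constant is fixed, and it takes a genuinely different route from the source. The paper gives no proof of this statement; it quotes it from \cite{Gud-Sak-1}, where it rests on entrywise formulas for $\tau$ and $\kappa$ on the coordinate functions $z_{j\alpha},w_{j\alpha}$, computed with the basis $\B_{\sp n}$ and then extended bilinearly. You replace those tables by a single completeness relation for $\mathfrak{sp}_n(\cn)$, so the whole cancellation reduces to $v^tgJg^tv=v^tJv=0$.

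The constant you left open is $\tfrac12$, not $\tfrac14$. Since $\B_{\sp n}$ is orthonormal for $\Re\trace(XY^*)=-\trace(XY)$ on $\sp n$, it is a $(-\trace)$-orthonormal $\cn$-basis of $\mathfrak{sp}_n(\cn)$. Hence $\sum_X\trace(MX)\,X=-\tfrac12(M+JM^tJ)$, and
$$\sum_{X\in\B_{\sp n}}\trace(MX)\trace(NX)=-\tfrac12\trace(MN)+\tfrac12\trace(MJN^tJ^{-1}).$$
With $\tfrac14$ you would get $\mu=-\tfrac14$. A convenient check that pins the factor is to put $M=E_{ki}$, $N=E_{jk}$ and sum over $k$. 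This gives $\sum_X X^2=-(n+\tfrac12)I_{2n}$, so the same identity also yields $\lambda=-\tfrac{2n+1}{2}$ without squaring the basis elements. Alternatively, use $\trace\sum_X X^2=-n(2n+1)$ together with Schur's lemma.

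What your approach buys is a basis-free proof in the same spirit as the paper's use of $\sigma(X)=JX^tJ$ in Lemma \ref{lemma-SP2nC-Submersion}. It works verbatim for any fixed $v\in\cn^{2n}$ in place of $(p,0)$. It also makes transparent why no isotropy hypothesis is needed here, unlike for $\SO n$, where the analogous cross term comes from the symmetric relation $xx^t=I$ and vanishes only under an isotropy condition. The entrywise approach of the source instead produces the complete table of $\kappa$ on coordinates, which can be reused for other combinations of matrix entries.
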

Notice that this is an eigenfunction of the form 
\begin{eqnarray*}
\trace (p a^* z^t+pb^*w^t) &=& \trace (p a^* z^t) + \trace (pb^*w^t) \\
&=&  \trace (z (p a^*)^t) + \trace (w (pb^*)^t) \\
&=& \trace ( \bar ap^t z) + \trace (\bar bp^t w) \\
&=& \trace \left( \begin{pmatrix}
	\bar a \\ \bar b
\end{pmatrix} \begin{pmatrix}
p^t & 0
\end{pmatrix}  \begin{pmatrix}
z & w\\ -\bar w & \bar z
\end{pmatrix}\right)
 \\
&=& \trace (\tilde A q)
\end{eqnarray*}
where $$\tilde A = \begin{pmatrix}
	\bar a \\ \bar b
\end{pmatrix} \begin{pmatrix}
p^t & 0
\end{pmatrix} .$$
To simplify notation we relabel each of $a,b$ and $p$ by its complex conjugate.

Recall that the standard representation of $\SpC{n}$ is given by matrices of the form
$$ q=\begin{pmatrix}
	x & y \\
	z & w
\end{pmatrix}$$
where
\begin{equation}\label{eqn-Sp2nC}
	x^tz=z^tx,w^ty=y^tw \text{ and } x^tw-z^ty = I_n.
\end{equation}

Thus we have that the set \small
$$  \F(p) = \left\{ \psi_{ab}:\SpC{n}/\Sp{n} \to \cn \mid \psi_{ab} = \trace\left( \begin{pmatrix}
	a \\ b
\end{pmatrix} \begin{pmatrix}
p^* & 0
\end{pmatrix}  q q^*\right), \ a,b \in \cn^n
\right\} $$ \normalsize
is a $(\tfrac{2n+1}{2},\tfrac{1}{2})$-eigenfamily on $\SpC{n}/\Sp{n}$.

\begin{lemma}\label{lemma-SP2nC-Submersion}
Let $A$ be a rank-one matrix. Any eigenfunction of the form 
$$ \psi=\trace (A qq^*) :\SpC{n}/\Sp{n} \to \cn$$
has a nowhere-vanishing gradient.
\end{lemma}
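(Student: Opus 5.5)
The plan is to argue exactly as in the proofs of Theorems \ref{thm-so-minimal} and \ref{thm-slc-minimal}, using Lemma \ref{lemma-critical-point} together with a rank count. By Lemma \ref{lemma-critical-point}, $\nabla\psi(q)=0$ if and only if $q^*Aq$ is orthogonal to $\h^\cn=\mathfrak{sp}(n,\cn)$ with respect to $g(Z,W)=\Re\trace(ZW^*)$. Since $q$ and $q^*$ are invertible and $A$ has rank one, $q^*Aq$ has rank exactly one for every $q\in\SpC n$. So it suffices to show that the orthogonal complement of $\mathfrak{sp}(n,\cn)$ in $\glc{2n}$ contains no rank-one matrix.

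To identify this complement, I will use the description of the complex symplectic Lie algebra in terms of
$$J=\begin{pmatrix}0&I_n\\-I_n&0\end{pmatrix}.$$
For the standard representation, (\ref{eqn-Sp2nC}) says that $\SpC n=\{q\mid q^tJq=J\}$, so
$$\mathfrak{sp}(n,\cn)=\{X\mid X^tJ+JX=0\}=\{X\mid JX \text{ is symmetric}\}=\{J^{-1}S\mid S^t=S\}.$$
I will check that this agrees with the complexification of the real form $\sp n$ written above; it should, since that real form is cut out by the same condition together with $X^*=-X$.

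Next, $\mathfrak{sp}(n,\cn)$ is a complex subspace. Hence $Y$ is $g$-orthogonal to it if and only if $\trace(XY^*)=0$ for all $X$ in it: we apply the real condition to both $X$ and $iX$, as in the proof of Lemma \ref{lemma-critical-point}. Writing $X=J^{-1}S$ gives
$$\trace(J^{-1}SY^*)=\trace(S\,Y^*J^{-1}).$$
This vanishes for all symmetric $S$ exactly when $K:=Y^*J^{-1}$ is skew-symmetric, because symmetric and skew-symmetric matrices are mutually orthogonal for the pairing $\trace(ST)$ and together span everything. Therefore
$$\mathfrak{sp}(n,\cn)^\perp=\{(KJ)^*\mid K^t=-K\}.$$

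The final step is the rank count. A complex skew-symmetric matrix has even rank, and $J$ is invertible, so every element of $\mathfrak{sp}(n,\cn)^\perp$ has rank $\operatorname{rank}K$, which is even. The matrix $q^*Aq$ has rank one, which is odd, so it never lies in $\mathfrak{sp}(n,\cn)^\perp$. Hence $\nabla\psi$ vanishes nowhere.

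The only delicate point is the bookkeeping of conjugations and transposes when computing the orthogonal complement, and confirming that the identification of $\h^\cn$ matches the block form of $\sp n$ given in the paper. The parity argument itself is robust to these conventions: any version of the complement is an invertible matrix times a skew-symmetric matrix, possibly conjugated or transposed, and such matrices always have even rank.
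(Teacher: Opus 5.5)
Your proof is correct and follows the paper's argument. Both use Lemma \ref{lemma-critical-point} to reduce to showing that the rank-one matrix $q^*Aq$ never lies in $\mathfrak{sp}_n(\cn)^\perp$, and both then note that every element of that complement is an invertible matrix times a complex skew-symmetric one, so it has even rank. The only difference is cosmetic: the paper obtains the complement as the $-1$-eigenspace of the isometric involution $\sigma(X)=JX^tJ$, whereas you compute it directly from the trace pairing of symmetric and skew-symmetric matrices; the two descriptions agree, since your $(KJ)^*=J\bar K$ satisfies that $JY$ is skew-symmetric.
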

\begin{proof}
Once again we will apply Lemma \ref{lemma-critical-point}. Recalling that
$$ \mathfrak{sp}_{n}(\cn) = \{ X \in \glc{2n} \mid X^tJ+JX = 0 \}, $$	
where $$ J = \begin{pmatrix}
	0 & I \\
	-I & 0
\end{pmatrix} $$
we can compute $\mathfrak{sp}_{n}(\cn)^\perp \subset \glc{2n}$ by considering the involution
$$ \sigma(X) = JX^tJ$$
whose $\pm 1$ eigenspaces are the spaces where $JX$ is symmetric and skew-symmetric respectively.
By the skew-symmetry of $J$ and the fact that $J^2 =-I_{2n}$ we have that:
\begin{eqnarray*}
	\langle \sigma(X),\sigma(Y) \rangle &=& \Re \trace ( \sigma(X) \sigma(Y)^*) \\
	&=& \Re \trace ( JX^t J J^* {Y^*}^t J^*) \\
	&=& \Re \trace ( X {Y^*})\\
\end{eqnarray*}
Thus $\sigma$ is an isometry, so the eigenspaces are orthogonal and therefore 
$$ \mathfrak{sp}_{n}(\cn)^\perp = \{ Y \in \glc{2n} \mid Y^tJ-JY = 0 \}.$$	
By Lemma \ref{lemma-critical-point}, $q$ is a critical point of $\psi$ if and only if 
$$ q^* A q \in \mathfrak{sp}_{n}(\cn)^\perp,$$
however $q^* A q$ is always rank-one, while every $Y \in \mathfrak{sp}_{n}(\cn)^\perp$ has the same rank as $JY$ which must be even, since $JY$ is skew-symmetric.
\end{proof}
In order to produce minimal submanifolds from these eigenfunctions we need to have $0$ in the image.

Now we can prove the main result of this section.

\begin{proof}[Proof. (Theorem \ref{thm-Sp-minimal})]
	We first suppose that $p$ and $a$ are linearly independent. Choose a Hermitian orthonormal basis $\{e_1,\dots, e_n\}$ for $\cn^n$ such that $p,a,b$ are contained in the spans of $\{e_1\},\{e_1,e_2\},\{e_1,e_2,e_3\}$ respectively. We first consider the case that $p$ and $a$ are linearly independent.
	Let $\alpha \in \cn$ and consider the matrix
	$$ q =\begin{pmatrix}
		I_n & \begin{pmatrix}
		0  & 1 &0 \\ 1  & \alpha & 0 \\ 0 & 0 & 0_{n-2}
		\end{pmatrix} \\ 0 & I_{n}
	\end{pmatrix} $$
	which clearly satisfies equations (\ref{eqn-Sp2nC}) and so is a point of $\Sp{n,\cn}$.
	Now we compute
	\begin{eqnarray*}
		&&\psi_{ab}(q) \\&=& \trace\left( \begin{pmatrix}
	a \\ b
\end{pmatrix} \begin{pmatrix}
p^* & 0
\end{pmatrix}  q q^*\right) \\
&=& \trace\left( \begin{pmatrix}
	a \\ b
\end{pmatrix} \begin{pmatrix}
p^* & 0
\end{pmatrix}  \begin{pmatrix}
		I_n & \begin{pmatrix}
		0  & 1 &0 \\ 1  & \alpha & 0 \\ 0 & 0 & 0_{n-2}
		\end{pmatrix} \\ 0 & I_{n}
	\end{pmatrix} 
	\begin{pmatrix}
		I_n & 0 \\ \begin{pmatrix}
		0  & 1 &0 \\ 1  & \overline{\alpha} & 0 \\ 0 & 0 & 0_{n-2}
		\end{pmatrix} & I_{n}
	\end{pmatrix} \right) \\
	&=& \begin{pmatrix}
p^* & 0
\end{pmatrix} 
\begin{pmatrix}
		I_n+ \begin{pmatrix}
		1 & \overline{\alpha} &0 \\ \alpha  & 1+\alpha \overline{\alpha} & 0 \\ 0 & 0 & 0_{n-2}
		\end{pmatrix}  & \begin{pmatrix}
		0 & 1 &0 \\ 1  & \alpha & 0 \\ 0 & 0 & 0_{n-2}
		\end{pmatrix} \\ \begin{pmatrix}
		0  & 1 &0 \\ 1  & \overline{\alpha} & 0 \\ 0 & 0 & 0_{n-2}
		\end{pmatrix} & I_{n} 
	\end{pmatrix} 
\begin{pmatrix}
	a \\ b
\end{pmatrix} \\
&=& \bar p_1(2a_1+\overline{\alpha}a_2+b_2)
	\end{eqnarray*} 
	which vanishes if and only if 
	$$2a_1+\overline{\alpha}a_2+b_2 = 0.$$
	Since $a$ is not a scalar multiple of $p$, $a_2 \neq 0$ so we choose $\alpha = \overline{ \frac{-2a_1-b_2}{a_2}.}$

Now suppose that $p$ and $a$ are linearly dependent, i.e $ a = \gamma \cdot p$. If $b = 0$ the calculation in subsection \ref{subsection-zeros} shows that $\psi_{ab}$ is never zero, so it remains to consider the case $b \neq 0$. Choose a basis $\{e_1,\dots, e_n\}$ for $\cn^n$ such that $p$ and $a$ are proportional to $e_1$ and $b$ is contained in the span of $\{e_1,e_2\}$. In this basis the matrix $A$ has representation

$$ \begin{pmatrix}
\gamma & 0 & \cdots & 0 \\
0 & 0 & \cdots & 0 \\
\vdots & & \vdots &\\
0 & 0 & \cdots & 0 \\
\mu & 0 & \cdots & 0 \\
\nu & 0 & \cdots & 0 \\
0 & 0 & \cdots & 0 \\
\vdots &  & \vdots & \\
0 & 0 & \cdots & 0 
\end{pmatrix}  $$
for some $\gamma,\mu,\nu \in \cn$ with not both $\mu$ and $\nu$ being $0$. 
Thus we have that
$$\psi_{ab}(q) = \trace(A qq^*) = \gamma \cdot (q_1,q_1)+\mu\cdot(q_1,q_{n+1})+\nu \cdot (q_1,q_{n+2})$$
where $(\cdot,\cdot)$ denotes the Hermitian inner product on $\cn^{2n}$ and $q_i$ is the $i$-th row of $q$.
Then notice that for $q_0 \in \SpC{n}$ of the form
$$ q_0=\begin{pmatrix}
	I & 0 \\
	y & I
\end{pmatrix}$$
where we let $$y= -\frac{\bar \gamma}{\bar \mu}E_{11} \ \  \text{or} \ \  y= - \frac{\bar \gamma}{\bar \nu} (E_{12}+E_{21})$$
we have that $\psi_{ab}(q_0) = 0$. Then by Lemma \ref{lemma-SP2nC-Submersion} and Theorem \ref{Gud-Mun-1-Thm} we have that the fibre over $0$ is a minimal submanifold of codimension two.
\end{proof}


                 

\end{document}